\documentclass[11pt]{amsart}
\usepackage{esint}
\usepackage{hyperref}
\usepackage{color}
\usepackage{graphicx}
\usepackage{verbatim}
\usepackage{enumitem}
\usepackage{mathtools}

\newtheorem{theorem}{Theorem}[section]
\newtheorem{lemma}[theorem]{Lemma}
\newtheorem{proposition}[theorem]{Proposition}
\newtheorem{corollary}[theorem]{Corollary}

\theoremstyle{remark}
\newtheorem{remark}[theorem]{Remark}

\numberwithin{equation}{section}
\providecommand{\Ric}{\operatorname{Ric}}
\providecommand{\Sec}{\operatorname{Sec}}

\providecommand{\ind}{\operatorname{Ind}}
\newcommand{\II}{\mathrm I\!\mathrm I}

\DeclareMathOperator*{\essinf}{ess\,inf}

\title[Second Robin eigenvalue bounds for Schr\"odinger operators]
{Second Robin eigenvalue bounds for Schr\"odinger operators on Riemannian surfaces}
\author[R.\ Antonia]{Railane Antonia}
\author[M.\ P.\ Cavalcante]{Marcos P.\ Cavalcante}
\author[V.\ Souza]{Vinicius Souza}

\address{Institute of Mathematics, Federal University of Alagoas, Macei\'o, Brazil}
\email{railane.silva@academico.ufpb.br}
\email{marcos@pos.mat.ufal.br}
\email{vinicius.souza@im.ufal.br}

\keywords{Schr\"odinger operators, eigenvalue bounds, Robin and Steklov boundary problems, Dirichlet-to-Neumann map, free boundary minimal surfaces}
\subjclass[2020]{Primary 35P15, 35J10, 58J50; Secondary 53A10, 49Q05}
\date{\today}

\begin{document}

\begin{abstract}
Let $(\Sigma^2,ds^2)$ be a compact Riemannian surface, possibly with boundary, and
consider Schr\"odinger-type operators of the form $L=\Delta+V-aK$ together with
natural Robin and Steklov-type boundary conditions incorporating a boundary
potential $W$ and (in the curvature-corrected setting) the geodesic curvature
$\kappa_g$ of $\partial\Sigma$. Our main contribution is a geometric upper bound
for the second Robin eigenvalue in terms of the topology of $\Sigma$ and the
integrals of $V$ and $W$, obtained via a Hersch balancing argument on the capped
surface. As a geometric application, we derive sharp topological restrictions for
compact two-sided free boundary minimal surfaces of Morse index at most one inside
geodesic balls of negatively curved pinched Cartan--Hadamard $3$-manifolds under a
mild radius condition. We also prove complementary upper bounds for first
eigenvalues in the closed and Robin settings, including rigidity in the
curvature-corrected case, and we establish Steklov-type estimates in a coercive
regime where the Dirichlet-to-Neumann operator is well defined for all boundary
data.
\end{abstract}

\maketitle

\section{Introduction}\label{sec:intro}

Let $(\Sigma^2,ds^2)$ be a connected compact Riemannian surface, possibly with
smooth boundary $\partial\Sigma$. Given $V\in L^\infty(\Sigma)$ and $a\in\mathbb R$,
we consider the Schr\"odinger-type operator
\begin{equation}\label{eq:Schrodinger}
L=\Delta+V-aK,
\end{equation}
where $\Delta=\mathrm{div}\,\nabla\le 0$ is the Laplace--Beltrami operator and $K$
is the Gaussian curvature of $\Sigma$. When $\partial\Sigma\neq\emptyset$ we also
allow a boundary potential $W\in L^\infty(\partial\Sigma)$ and a boundary operator
\begin{equation}\label{eq:boundary-operator}
Bu=\frac{\partial u}{\partial\nu}-Wu+a\,\kappa_g\,u,
\end{equation}
where $\nu$ denotes the outward unit conormal and $\kappa_g$ is the geodesic
curvature of $\partial\Sigma$ in $\Sigma$.

\medskip
 
Spectral estimates for operators of the form \eqref{eq:Schrodinger} (and of the associated boundary problems below) are closely
linked to geometry. A fundamental example is the Jacobi operator governing the
second variation of area for minimal and constant mean curvature surfaces, which
can be written in Schr\"odinger form. This connection has long been exploited to
derive quantitative constraints on stability, index, and topology in surface
theory; see for instance \cite{
BerardCastillon,
Castillon,
Silveira,
dCPeng,
Espinar,
EspinarRosen,
FCSchoen,
Ros25,
SY79} and the references therein.

\medskip

For Schr\"odinger-type operators on closed surfaces, there is a large literature on
upper bounds for eigenvalues in terms of conformal/topological data, going back
to classical ideas of Hersch and the subsequent development of conformal volume
and branched-cover methods; see e.g.\ \cite{Hersch,Yau1987,ElSoufiIlias1992,R06}.
In the presence of boundary, Robin and Steklov spectra encode both interior and boundary geometry, and sharp inequalities often require combining conformal test functions with boundary trace estimates; see for instance \cite{ColboisGirouardGordonSher,Escobar,FraserSchoen11,FraserSchoen13}
and also \cite{LevitinMangoubiPolterovich} for related developments.

\medskip

Our first main theorem is an upper bound for the \emph{second} eigenvalue of the
Robin problem for the general Schr\"odinger operator $L_0=\Delta+V$ (i.e.\ $a=0$):
\begin{equation}\label{eq:Robin-intro}
\left\{
\begin{aligned}
\Delta u+Vu+\mu u&=0 &&\text{in }\Sigma,\\
\partial_\nu u&=Wu &&\text{on }\partial\Sigma.
\end{aligned}
\right.
\end{equation}
We index Robin eigenvalues as $\mu_0<\mu_1\le \mu_2\le\cdots$, so that $\mu_0$ denotes the first eigenvalue.

\begin{theorem}\label{thm:intro-mu1}
Let $(\Sigma^2,ds^2)$ be a compact oriented surface with boundary, of genus $\gamma$.
For \eqref{eq:Robin-intro} with $V\in L^\infty(\Sigma)$ and $W\in L^\infty(\partial\Sigma)$,
\begin{equation}\label{eq:intro-mu1}
\mu_1\,|\Sigma|
\;<\;
8\pi\Big\lfloor \frac{\gamma+3}{2}\Big\rfloor
\;-\;\int_\Sigma V\;-\;\int_{\partial\Sigma}W.
\end{equation}
\end{theorem}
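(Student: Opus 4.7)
The plan is to combine Hersch's second-eigenvalue trick with the Yang--Yau / Brill--Noether bound on the gonality of a compact Riemann surface, applied on a capped version of $\Sigma$. First, I would attach a topological disk to each boundary component of $\Sigma$ to form a closed oriented surface $\hat\Sigma$ of the same genus $\gamma$, equipped with any smooth metric extending the conformal class of $g$. By the Brill--Noether / Yang--Yau theorem, $\hat\Sigma$ admits a non-constant holomorphic map $\Phi : \hat\Sigma \to \mathbb{CP}^{1} \simeq S^{2} \subset \mathbb{R}^{3}$ of degree at most $d := \lfloor (\gamma+3)/2 \rfloor$, with components satisfying $\Phi_1^2 + \Phi_2^2 + \Phi_3^2 \equiv 1$. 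Let $u_0 > 0$ be the $L^2$-normalized first Robin eigenfunction of $L_0$ on $\Sigma$, which is strictly positive by the maximum principle; it will play the role that the constant function plays in Hersch's original argument for closed surfaces.

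To produce trial functions for $\mu_1$, I would balance $\Phi$ against $u_0$ via a Hersch-type argument. Using the three-parameter family of conformal dilations $\{\psi_\xi\}_{\xi \in B^3}$ of $S^2$, consider
\[
F(\xi) = \Bigl(\int_\Sigma u_0 \, dA\Bigr)^{-1} \int_\Sigma u_0 \, (\psi_\xi \circ \Phi) \, dA, \qquad \xi \in B^3.
\]
As $|\xi| \uparrow 1$, $\psi_\xi$ concentrates mass at $\xi/|\xi|\in S^2$, so $F$ extends continuously to the identity on $\partial B^3$ and a Brouwer degree argument produces $\xi_\ast \in B^3$ with $F(\xi_\ast) = 0$. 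Setting $\tilde\Phi := \psi_{\xi_\ast} \circ \Phi$, each coordinate $\tilde\Phi_i$ is $L^2$-orthogonal to $u_0$ and thus admissible in the variational characterization of $\mu_1$, giving $\mu_1 \int_\Sigma \tilde\Phi_i^{2} \le \int_\Sigma |\nabla \tilde\Phi_i|^{2} - \int_\Sigma V \tilde\Phi_i^{2} - \int_{\partial\Sigma} W \tilde\Phi_i^{2}$. Summing over $i = 1, 2, 3$ and using $\sum_i \tilde\Phi_i^{2} \equiv 1$ yields
\[
\mu_1 \,|\Sigma| \;\le\; \int_\Sigma |\nabla \tilde\Phi|^{2} \, dA \;-\; \int_\Sigma V \;-\; \int_{\partial\Sigma} W.
\]

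To control the Dirichlet energy, I would exploit conformality: $\int_{\hat\Sigma} |\nabla \tilde\Phi|^{2} \, dA = 2\,\mathrm{Area}(\tilde\Phi(\hat\Sigma)) = 8\pi\deg(\Phi) \le 8\pi d$. Since $\tilde\Phi$ is non-constant holomorphic, unique continuation forces its energy density to be strictly positive on an open dense set, so $\int_\Sigma |\nabla \tilde\Phi|^{2} < \int_{\hat\Sigma} |\nabla \tilde\Phi|^{2}$ as soon as the caps are non-empty, producing the strict inequality \eqref{eq:intro-mu1}. The most delicate step will be the Hersch balancing: one must verify continuity of $F$ up to $\partial B^3$ despite $\Phi$ having finitely many branch points, and then apply the Brouwer degree. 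This hinges on the strict positivity and $L^1$-integrability of the ground state $u_0$ together with the fact that $\{\psi_\xi \circ \Phi\}$ behaves as an approximate identity concentrating at $\xi/|\xi|$ as $|\xi| \uparrow 1$.
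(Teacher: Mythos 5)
Your proposal is correct and follows essentially the same route as the paper: cap the boundary components, take a branched conformal cover $\widehat\Sigma\to\mathbb S^2$ of degree at most $\lfloor(\gamma+3)/2\rfloor$, balance against the positive ground state $u_0$ by Hersch's argument, test the min--max characterization of $\mu_1$ with the three coordinates, and get strictness from the energy lost on the caps. The only difference is presentational: the paper quotes the balanced map directly from a lemma of Chen--Fraser--Pang, whereas you reconstruct it from Brill--Noether plus the Brouwer-degree balancing, which is exactly how that lemma is proved.
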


The estimate \eqref{eq:intro-mu1} can be viewed as a boundary counterpart of
the conformal topological eigenvalue bounds of El Soufi--Ilias \cite{ElSoufiIlias1992} on closed
manifolds (and related works), now adapted to the Robin problem on surfaces with
boundary.
The proof combines three ingredients: (i)~we cap $\partial\Sigma$ by discs to
obtain a closed surface $\widehat\Sigma$; (ii)~we use a conformal branched cover
$\widehat\Sigma\to\mathbb S^2$ of controlled degree (cf.\ \cite{RosVergasta1995, ChenFraserPang}); and (iii)~we apply a Hersch balancing argument \cite{Hersch} to build conformal test functions,
together with the min--max characterization of $\mu_1$. 

\medskip

Our second main theorem gives topological restrictions for compact two-sided free
boundary minimal surfaces of low Morse index inside geodesic balls of negatively
curved pinched Cartan--Hadamard $3$-manifolds.  This establishes a hyperbolic analogue of the index--one topological bound proved by Chen--Fraser--Pang in ambients with $\Ric\ge0$ and
weakly convex boundary~\cite{ChenFraserPang} (see also \cite{Longa2022}). Such restrictions are part of a
broader circle of ideas relating index bounds, curvature pinching, and topology
for minimal/CMC surfaces; see for example 
\cite{ACS,CO, MazetMendes,Nunes2017,RosVergasta1995} and
the references therein.

Let $M^3$ be Cartan--Hadamard with pinched sectional curvature
\[
-\kappa_1\le \Sec_M\le -\kappa_2<0,\qquad 0<\kappa_2\le\kappa_1.
\]
Let $B_o(R)\subset M^3$ be a geodesic ball. If $\Sigma^2\subset B_o(R)$ is a compact
two-sided free boundary minimal surface, then its Jacobi operator is $J=\Delta+V$
with $V=\Ric_M(N,N)+|A|^2$ and Robin boundary term $W=\II^{\partial B_o(R)}(N,N)$.
Since $\ind(\Sigma)\le 1$ implies $\mu_1(J)\ge 0$, Theorem~\ref{thm:intro-mu1}
yields restrictions once $V$ and $W$ are controlled by curvature comparison.

\begin{theorem}\label{thm:intro-FBMS}
Let $M^3$ be Cartan--Hadamard manifold with $-\kappa_1\le \Sec_M\le -\kappa_2<0$ and fix $o\in M$,
$R>0$. Assume
\begin{equation}\label{eq:intro-Rcond}
4\kappa_{1}\tanh\!\Big(\frac{\sqrt{\kappa_{2}}\,R}{2}\Big)
\ \le\
3\kappa_{2}\coth(\sqrt{\kappa_{2}}\,R).
\end{equation}
If $\Sigma^{2}\subset B_{o}(R)$ is a compact two-sided free boundary minimal surface with
$\ind(\Sigma)\le 1$, then the only possibilities are
\[
\gamma\in\{0,1\}\ \text{ with }\ r\le 3,
\qquad\text{or}\qquad
\gamma\in\{2,3\}\ \text{ with }\ r=1,
\]
where $\gamma$ is the genus of $\Sigma$ and $r$ is the number of boundary components.
\end{theorem}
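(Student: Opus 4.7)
The plan is to run the index hypothesis through Theorem~\ref{thm:intro-mu1} and match its upper bound with a curvature--comparison lower bound for $\int V+\int W$, the assumption~\eqref{eq:intro-Rcond} being exactly what one needs to collapse the combined estimate into a topological inequality. Two--sidedness identifies the Jacobi operator of $\Sigma$ with the Robin problem~\eqref{eq:Robin-intro} for $V=\Ric_{M}(N,N)+|A|^{2}$ and $W=\II^{\partial B_{o}(R)}(N,N)$, and $\ind(\Sigma)\le 1$ is the statement $\mu_{1}\ge 0$. Theorem~\ref{thm:intro-mu1} therefore gives
\[
\int_{\Sigma}V+\int_{\partial\Sigma}W\ <\ 8\pi\Big\lfloor\tfrac{\gamma+3}{2}\Big\rfloor,
\]
so it is enough to show that the same quantity is $\ge -4\pi\chi(\Sigma)=4\pi(2\gamma+r-2)$.

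\smallskip

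The lower bound assembles from three pointwise inputs. First, for minimal $\Sigma^{2}\subset M^{3}$ the Gauss equation $|A|^{2}=2K_{M}^{T\Sigma}-2K_{\Sigma}$ rewrites $V=\bigl(\Ric_{M}(N,N)+2K_{M}^{T\Sigma}\bigr)-2K_{\Sigma}$; the bracket is a sum of four sectional curvatures of $M$ and is therefore $\ge -4\kappa_{1}$, so Gauss--Bonnet with boundary yields
\[
\int_{\Sigma}V\ \ge\ -4\kappa_{1}\,|\Sigma|-4\pi\chi(\Sigma)+2\int_{\partial\Sigma}\kappa_{g}.
\]
Second, orthogonality of $\Sigma$ and $\partial B_{o}(R)$ identifies $\kappa_{g}^{\Sigma}=\II^{\partial B_{o}(R)}(T,T)$ and $W=\II^{\partial B_{o}(R)}(N,N)$ as two principal--type curvatures of the geodesic sphere; the Hessian comparison for $r=d(o,\cdot)$ under $\Sec_{M}\le-\kappa_{2}$ bounds each below by $\sqrt{\kappa_{2}}\coth(\sqrt{\kappa_{2}}R)$, giving $2\int_{\partial\Sigma}\kappa_{g}+\int_{\partial\Sigma}W\ \ge\ 3\sqrt{\kappa_{2}}\coth(\sqrt{\kappa_{2}}R)\,|\partial\Sigma|$.

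\smallskip

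Third---and this is where the argument earns its condition---is the sharp free boundary monotonicity $|\Sigma|\le \tanh(\sqrt{\kappa_{2}}R/2)/\sqrt{\kappa_{2}}\cdot|\partial\Sigma|$, whose constant is exactly the area--to--perimeter ratio of the equatorial disk in a geodesic ball of $\mathbb H^{3}_{-\kappa_{2}}$. I would prove it by testing with the model radial solution of $\Delta_{\mathbb H^{2}_{-\kappa_{2}}}f=1$, namely $f(r)=(2/\kappa_{2})\log\cosh(\sqrt{\kappa_{2}}r/2)$, for which $f'(r)=\tanh(\sqrt{\kappa_{2}}r/2)/\sqrt{\kappa_{2}}$ and $f''(r)=\tfrac{1}{2}\operatorname{sech}^{2}(\sqrt{\kappa_{2}}r/2)$. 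Minimality of $\Sigma$ and the Hessian comparison in $M$ give
\[
\Delta_{\Sigma}f\ \ge\ f''|\nabla^{\Sigma}r|^{2}+f'\sqrt{\kappa_{2}}\coth(\sqrt{\kappa_{2}}r)\bigl(2-|\nabla^{\Sigma}r|^{2}\bigr),
\]
and the identities $\tanh(x/2)\coth(x)=1-\tfrac12\operatorname{sech}^{2}(x/2)$ and $\operatorname{sech}^{2}+\tanh^{2}=1$ collapse the right--hand side to something $\ge 1$ for every value of $|\nabla^{\Sigma}r|^{2}\in[0,1]$. Integrating and using $\partial_{\nu}r=1$ along $\partial\Sigma$ (from orthogonality) closes the monotonicity with boundary constant $f'(R)$. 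This is the main obstacle: a rougher radial test function such as $r^{2}$ or $\cosh(\sqrt{\kappa_{2}}r)-1$ produces a factor $\sinh(\sqrt{\kappa_{2}}R)$ rather than $\tanh(\sqrt{\kappa_{2}}R/2)$, and the weaker hypothesis~\eqref{eq:intro-Rcond} would then be insufficient.

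\smallskip

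Combining the three ingredients yields
\[
\int_{\Sigma}V+\int_{\partial\Sigma}W\ \ge\ -4\pi\chi(\Sigma)+\frac{|\partial\Sigma|}{\sqrt{\kappa_{2}}}\Bigl[3\kappa_{2}\coth(\sqrt{\kappa_{2}}R)-4\kappa_{1}\tanh(\sqrt{\kappa_{2}}R/2)\Bigr],
\]
where the bracket is $\ge 0$ precisely by~\eqref{eq:intro-Rcond}. Comparing with the bound from Theorem~\ref{thm:intro-mu1} forces $-4\pi\chi(\Sigma)<8\pi\lfloor(\gamma+3)/2\rfloor$, i.e.\ $2\gamma+r-2<2\lfloor(\gamma+3)/2\rfloor$; a parity split on $\gamma$ with $r\ge 1$ leaves only $\gamma\in\{0,1\}$ with $r\le 3$ and $\gamma\in\{2,3\}$ with $r=1$.
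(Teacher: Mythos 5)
Your proposal is correct and follows essentially the same route as the paper: reduce to $\mu_1\ge 0$, invoke Theorem~\ref{thm:intro-mu1}, and lower-bound $\int_\Sigma V+\int_{\partial\Sigma}W$ via the Gauss equation, Hessian comparison for $W$ and $\kappa_g$, Gauss--Bonnet, and the linear isoperimetric inequality $|\Sigma|\le \sqrt{\kappa_2}^{-1}\tanh(\sqrt{\kappa_2}R/2)\,|\partial\Sigma|$, with the same final parity count. The only difference is that the paper cites this last inequality from Lee--Seo, whereas you prove it directly with the test function $f(r)=(2/\kappa_2)\log\cosh(\sqrt{\kappa_2}\,r/2)$; your computation (linearity in $t=|\nabla^\Sigma r|^2$ and the identity $\tanh(x/2)\coth x=1-\tfrac12\operatorname{sech}^2(x/2)$) checks out and makes the argument self-contained.
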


\begin{remark}
The left-hand side of \eqref{eq:intro-Rcond} increases with $R$ and the right-hand side
decreases with $R$, so \eqref{eq:intro-Rcond} is a genuine geometric restriction linking
$R$ and the pinching ratio $\kappa_1/\kappa_2$. We discuss its role and provide a
qualitative interpretation in Section~\ref{sec:FBMS}.
\end{remark}

\medskip

We also establish upper bounds for first eigenvalues in the closed and Robin
settings, including rigidity in the curvature-corrected case (compare with
intrinsic counterparts of results in immersed surface theory such as
\cite{AliasMO,BatistaS}). Finally, we obtain Steklov-type estimates in a coercive
regime where the Dirichlet problem is well posed for all boundary data and the
Dirichlet-to-Neumann operator is defined on the full trace space; cf.\ related
spectral-geometric frameworks in \cite{AEKS,ColboisGirouardGordonSher,Escobar,FraserSchoen13,Tran,JZhu}.

\medskip

\noindent\textbf{Organization.}
Section~\ref{sec:closed} contains the closed-surface first-eigenvalue bound and a
stability corollary. Section~\ref{sec:Robin} proves the first Robin bound and then
establishes Theorem~\ref{thm:intro-mu1}. Section~\ref{sec:FBMS} proves
Theorem~\ref{thm:intro-FBMS}. Section~\ref{sec:Steklov} develops the Jacobi--Steklov
framework and proves Steklov-type eigenvalue bounds.

\section{Closed surfaces: a first-eigenvalue bound}\label{sec:closed}

Let $\Sigma$ be a closed oriented surface and let $\lambda_0(L)$ denote the first
eigenvalue of $L$ in \eqref{eq:Schrodinger}. It is classical that $\lambda_0(L)$ is
simple and admits a positive eigenfunction.

\begin{theorem}\label{thm:closed-lambda0}
Let $(\Sigma^2,ds^2)$ be a closed oriented surface, and let $L=\Delta+V-aK$ with
$V\in L^\infty(\Sigma)$ and $a\in\mathbb R$. Then
\[
\lambda_0(L)\,|\Sigma|
\;\le\;
-(\essinf_{\Sigma}V)\,|\Sigma|+2\pi a\,\chi(\Sigma),
\]
where $|\Sigma|$ is the area and $\chi(\Sigma)=2-2\gamma$ is the Euler characteristic.
Moreover, if $a\neq 0$ and equality holds, then $V$ and $K$ are constant and
\[
K=\frac{\lambda_0(L)+V}{a}.
\]
\end{theorem}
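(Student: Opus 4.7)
The plan is straightforward: apply the min--max characterization of $\lambda_0(L)$ to the constant test function $u\equiv 1$, and convert the curvature integral via Gauss--Bonnet.

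Concretely, with the sign convention $\Delta=\mathrm{div}\,\nabla\le 0$ fixed in the introduction, the eigenvalue equation $Lu+\lambda u=0$ has Rayleigh principle
\[
\lambda_0(L)=\inf_{u\in H^1(\Sigma)\setminus\{0\}}\frac{\int_\Sigma\bigl(|\nabla u|^2-Vu^2+aKu^2\bigr)}{\int_\Sigma u^2}.
\]
Testing with $u\equiv 1$ kills the gradient term and gives $\lambda_0(L)\,|\Sigma|\le -\int_\Sigma V+a\int_\Sigma K$. Using Gauss--Bonnet $\int_\Sigma K=2\pi\chi(\Sigma)$ (valid because $\Sigma$ is closed and oriented), together with the pointwise bound $-V\le -\essinf_\Sigma V$, then delivers the claimed inequality.

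For rigidity, I would assume $a\ne 0$ and that equality holds throughout. The equality $-\int_\Sigma V=-(\essinf_\Sigma V)|\Sigma|$ forces $V=\essinf_\Sigma V$ almost everywhere, so $V$ is essentially constant. The equality in the test-function step means $u\equiv 1$ attains the infimum in the Rayleigh quotient, hence is a first eigenfunction, i.e.\ $L(1)=-\lambda_0(L)$. This gives the pointwise identity $V-aK=-\lambda_0(L)$, and $a\ne 0$ lets us solve $K=(\lambda_0(L)+V)/a$, which is constant since $V$ is (and $K$ is smooth, so the a.e.\ statement passes to everywhere).

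There is no real obstacle here: the entire argument is a one-line test-function computation plus Gauss--Bonnet, and should appear as a short warm-up in the closed-surface section before the more delicate Robin bound of Theorem~\ref{thm:intro-mu1}. The only points to watch are keeping the two independent equality conditions (optimality of the constant and saturation of the essential infimum) separate in the rigidity analysis, and tracking the sign conventions so that the Rayleigh quotient carries the correct signs on $V$ and $aK$.
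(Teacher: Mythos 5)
Your proof is correct, but it takes a genuinely different route from the paper. The paper does not use the constant test function in the Rayleigh quotient; instead it takes the positive first eigenfunction $u$, applies the logarithmic substitution $\Delta\ln u=\tfrac{\Delta u}{u}-|\nabla\ln u|^2=-\lambda_0(L)-V+aK-|\nabla\ln u|^2$ (Perdomo's trick), and integrates over the closed surface to get the \emph{exact identity}
\[
\lambda_0(L)\,|\Sigma|=-\int_\Sigma V+a\int_\Sigma K-\int_\Sigma|\nabla\ln u|^2,
\]
from which the inequality follows by Gauss--Bonnet and $V\ge\essinf V$, and the rigidity follows because equality kills the gradient term, forcing $u$ constant. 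Your variational argument reaches the same inequality more directly and your rigidity analysis is sound: equality saturates both the essential-infimum bound (so $V=\essinf_\Sigma V$ a.e.) and the test-function step (so $u\equiv1$ is a minimizer of the Rayleigh quotient, hence a first eigenfunction, giving $V-aK=-\lambda_0(L)$ pointwise and $K$ constant by continuity when $a\neq0$). What the paper's approach buys is the identity displayed above, which carries the extra nonpositive term $-\int|\nabla\ln u|^2$ and is quoted again in the proof of Corollary~\ref{cor:closed-stable}; your one-sided inequality would in fact still suffice for that corollary, but if you adopt your route you should note that the corollary's citation of the identity has to be replaced by the weaker inequality. What your approach buys is brevity and the avoidance of the regularity/positivity discussion needed to justify $\ln u\in C^\infty$; the only standard fact you must invoke is that a Rayleigh-quotient minimizer is an eigenfunction.
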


\begin{proof}
Let $u>0$ solve $Lu+\lambda_0(L)u=0$. By elliptic regularity $u\in C^\infty(\Sigma)$,
so $\ln u\in C^\infty(\Sigma)$. As in Perdomo \cite{Perdomo} (see also \cite{AliasMO}),
\[
\Delta\ln u
=\frac{\Delta u}{u}-\frac{|\nabla u|^2}{u^2}
=-\lambda_0(L)-V+aK-|\nabla\ln u|^2.
\]
Integrating over $\Sigma$ and using $\int_\Sigma \Delta\ln u=0$, we get
\begin{equation}\label{eq:closed-identity}
\lambda_0(L)|\Sigma|
=
-\int_\Sigma V
+a\int_\Sigma K
-\int_\Sigma |\nabla\ln u|^2.
\end{equation}
Since $V\ge \essinf_\Sigma V$ a.e.\ and $\int_\Sigma K=2\pi\chi(\Sigma)$ by Gauss--Bonnet,
we obtain the desired inequality.

If equality holds and $a\neq 0$, then \eqref{eq:closed-identity} forces $\nabla u\equiv 0$,
so $u$ is constant, hence $V\equiv \inf_\Sigma V$ and $K$ is constant, yielding the formula.
\end{proof}

We say that the Schr\"odinger operator $L$ is \emph{stable} on a closed surface if $-L\ge 0$ in the $L^2$ sense, or equivalently $\lambda_0(L)\ge 0$. We have the following consequence. 

\begin{corollary}\label{cor:closed-stable}
Let $\Sigma$ be a closed oriented surface. If $L=\Delta+V-aK$ is stable on $\Sigma$, then:
\begin{enumerate}[label=\emph{(\arabic*)}, itemsep=2pt]
\item for $a=0$, $\displaystyle\int_\Sigma V\le 0$;
\item for $a>0$ and $V\ge 0$ a.e.\ on $\Sigma$, $\Sigma$ is either a sphere or a torus; in the torus case
$V\equiv 0$ a.e.
\end{enumerate}
\end{corollary}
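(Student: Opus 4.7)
The plan is to bypass the inequality in the statement of Theorem~\ref{thm:closed-lambda0} and work directly with the sharper identity \eqref{eq:closed-identity} established inside its proof, namely
\[
\lambda_0(L)\,|\Sigma|
=-\int_\Sigma V+a\int_\Sigma K-\int_\Sigma|\nabla\ln u|^2,
\]
where $u>0$ is the ground state. Stability says the left-hand side is non-negative, and the gradient term on the right is non-negative, so each case reduces to reading off what survives.

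For part (1), I would set $a=0$: the identity collapses to $\lambda_0(L)|\Sigma|=-\int_\Sigma V-\int_\Sigma|\nabla\ln u|^2$, and the chain
\[
0\le\lambda_0(L)|\Sigma|\le -\int_\Sigma V
\]
gives the conclusion. For part (2), I would apply Gauss--Bonnet to the curvature integral and use $V\ge 0$ to rearrange
\[
2\pi a\,\chi(\Sigma)\;\ge\;\int_\Sigma V+\int_\Sigma|\nabla\ln u|^2\;\ge\;0.
\]
Since $a>0$, this forces $\chi(\Sigma)\ge 0$, and among closed oriented surfaces that leaves only $\chi(\Sigma)\in\{0,2\}$, i.e.\ the sphere or the torus. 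In the torus case $\chi(\Sigma)=0$ turns the inequality into $\int_\Sigma V+\int_\Sigma|\nabla\ln u|^2\le 0$; both summands being non-negative, each must vanish, so $V\equiv 0$ a.e.\ (and as a byproduct $u$ is constant and $\lambda_0(L)=0$).

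I do not expect a real obstacle here; the one substantive observation is that the statement of Theorem~\ref{thm:closed-lambda0} is too weak for part (1), since it would only yield $\essinf_\Sigma V\le 0$. What is needed is the pointwise identity \eqref{eq:closed-identity}, whose $\int_\Sigma V$ term encodes the full integral rather than its essential infimum; once that is in hand, both conclusions drop out by sign considerations alone, without any further PDE input.
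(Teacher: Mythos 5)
Your proposal is correct and takes essentially the same route as the paper: both parts rest on the identity \eqref{eq:closed-identity} combined with Gauss--Bonnet and sign considerations, and your observation that the $\essinf$ form of Theorem~\ref{thm:closed-lambda0} is too weak for part (1) is precisely why the paper also argues from the identity there. The only cosmetic difference is that for part (2) the paper first cites Theorem~\ref{thm:closed-lambda0} to get $\chi(\Sigma)\ge 0$ and then returns to the identity for the torus case, whereas you work with the identity uniformly throughout.
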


\begin{proof}
If $a=0$, from \eqref{eq:closed-identity} and $\lambda_0(L)\ge 0$ we obtain
$0\le -\int_\Sigma V-\int_\Sigma|\nabla\ln u|^2$, hence $\int_\Sigma V\le 0$.

If $a>0$ and $V\ge 0$, Theorem~\ref{thm:closed-lambda0} implies
$0\le \lambda_0(L)|\Sigma|\le 2\pi a\,\chi(\Sigma)$, so $\chi(\Sigma)\ge 0$, hence $\Sigma$
is a sphere or a torus. If $\chi(\Sigma)=0$ (torus), then the inequality forces $\inf_\Sigma V=0$.
Since also $\int_\Sigma V\le a\int_\Sigma K=0$ from \eqref{eq:closed-identity}, we get $V\equiv 0$  a.e.
\end{proof}

These estimates may be seen as intrinsic counterparts of results proved in the immersed setting,
compare \cite[Theorem 2.1]{AliasMO} and \cite[Theorem 1.1]{BatistaS}.

\section{Surfaces with boundary: the Robin problem}\label{sec:Robin}

Throughout this section, $\Sigma$ is a compact oriented surface with smooth boundary.
Its Euler characteristic is
\[
\chi(\Sigma)=2-2\gamma-r,
\]
where $\gamma$ is the genus and $r$ is the number of boundary components.

\subsection{The first Robin eigenvalue}\label{subsec:Robin-mu0}

Consider the Robin problem for $L=\Delta+V-aK$ with boundary operator \eqref{eq:boundary-operator}:
\begin{equation}\label{eq:Robin-curv}
\left\{
\begin{aligned}
Lu+\mu u&=0 &&\text{in }\Sigma,\\
Bu&=0 &&\text{on }\partial\Sigma.
\end{aligned}
\right.
\end{equation}
Its spectrum is discrete:
\[
\mu_0(L)<\mu_1(L)\le\mu_2(L)\le\cdots\to+\infty
\]
And the associated quadratic form on $H^1(\Sigma)$ is
\begin{equation}\label{eq:Q-curv}
Q[u]=\int_\Sigma |\nabla u|^2-\int_\Sigma (V-aK)u^2-\int_{\partial\Sigma}(W-a\kappa_g)u^2.
\end{equation}
The first eigenvalue satisfies the Rayleigh characterization
\begin{equation}\label{eq:Rayleigh-mu0}
\mu_0(L)=\inf_{u\in H^1(\Sigma)\setminus\{0\}}\frac{Q[u]}{\int_\Sigma u^2}.
\end{equation}

Our first estimate with boundary controls the first Robin eigenvalue $\mu_0(L)$
in terms of geometric and topological data of $\Sigma$, in the spirit of
Theorem~\ref{thm:closed-lambda0}.

\begin{theorem}\label{thm:Robin-mu0}
Let $(\Sigma^2,ds^2)$ be a compact oriented surface with boundary, and let $\mu_0(L)$ be the
first eigenvalue of \eqref{eq:Robin-curv}. Then
\[
\mu_0(L)\,|\Sigma|
\;\le\;
-(\essinf_\Sigma V)\,|\Sigma|
-(\essinf_{\partial\Sigma}W)\,|\partial\Sigma|
+2\pi a\,\chi(\Sigma).
\]
Moreover, if $a\neq 0$ and equality holds, then $V$ and $W$ are constant a.e.\ and
\[
K=\frac{\mu_0(L)+V}{a},\qquad \kappa_g=\frac{W}{a}
\]
are constant.
\end{theorem}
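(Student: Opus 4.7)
The plan is to mirror the argument of Theorem~\ref{thm:closed-lambda0} while carefully tracking the boundary terms that appear (i) when one integrates $\Delta\ln u$ by parts against the Robin condition, and (ii) when one invokes the boundary form of Gauss--Bonnet, $\int_\Sigma K+\int_{\partial\Sigma}\kappa_g=2\pi\chi(\Sigma)$. The essential observation is that the geodesic-curvature correction $a\kappa_g$ built into the boundary operator $B$ is exactly the one that cancels the boundary Gauss--Bonnet contribution. This cancellation is what allows the clean boundary analogue of the closed-surface estimate, and it is also why an equality statement of the same flavor survives.

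First I would invoke the standard Krein--Rutman/Perron theory for the Robin problem (after shifting $L\rightsquigarrow L+C$ for $C$ sufficiently large, the associated quadratic form is coercive and the resolvent is compact and positivity-improving) to produce a first eigenfunction $u>0$ of class $C^\infty(\Sigma)$. Then $\ln u\in C^\infty(\Sigma)$, and the boundary condition $Bu=0$ reads $\partial_\nu u/u=W-a\kappa_g$ on $\partial\Sigma$. Following \cite{Perdomo},
\[
\Delta\ln u=\frac{\Delta u}{u}-|\nabla\ln u|^2=-\mu_0(L)-V+aK-|\nabla\ln u|^2.
\]
Integrating over $\Sigma$, the divergence theorem together with the Robin condition gives $\int_\Sigma\Delta\ln u=\int_{\partial\Sigma}(W-a\kappa_g)$, while Gauss--Bonnet yields $a\int_\Sigma K=2\pi a\,\chi(\Sigma)-a\int_{\partial\Sigma}\kappa_g$. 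Adding the two and cancelling the $a\int_{\partial\Sigma}\kappa_g$ contributions, I obtain the clean identity
\begin{equation}\label{eq:Robin-plan-id}
\mu_0(L)\,|\Sigma|=-\int_\Sigma V-\int_{\partial\Sigma}W+2\pi a\,\chi(\Sigma)-\int_\Sigma|\nabla\ln u|^2.
\end{equation}

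Bounding $V$ and $W$ below by their essential infima and discarding the nonnegative gradient term on the right-hand side of \eqref{eq:Robin-plan-id} immediately gives the claimed inequality. For the equality case with $a\ne 0$, identity \eqref{eq:Robin-plan-id} forces $\nabla\ln u\equiv 0$, so $u\equiv c>0$; substituting into $Lu+\mu_0(L)u=0$ produces $K=(\mu_0(L)+V)/a$, and the Robin condition reduces to $\kappa_g=W/a$, while equality in the essinf bounds forces $V\equiv\essinf_\Sigma V$ and $W\equiv\essinf_{\partial\Sigma}W$ a.e., which then makes $K$ and $\kappa_g$ constant. The main (and really only) subtle point will be sign bookkeeping in the integration-by-parts step: the convention $Bu=\partial_\nu u-Wu+a\kappa_g u$ must be combined correctly with both the divergence theorem and the boundary Gauss--Bonnet term for the cancellation to succeed; a misplaced sign would leave a residual $\int_{\partial\Sigma}\kappa_g$ term and would break the rigidity statement. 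No deeper analytical obstacle is expected, since the positivity of the principal Robin eigenfunction is standard for bounded potentials.
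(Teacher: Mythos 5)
Your proof is correct, but it takes a genuinely different route from the paper's. The paper simply tests the Rayleigh characterization \eqref{eq:Rayleigh-mu0} with the constant function $u\equiv 1$: since
\[
Q[1]=-\int_\Sigma(V-aK)-\int_{\partial\Sigma}(W-a\kappa_g)=-\int_\Sigma V-\int_{\partial\Sigma}W+2\pi a\,\chi(\Sigma)
\]
by Gauss--Bonnet, the inequality follows in one line, and the equality case is handled by observing that equality forces $u\equiv 1$ to be a first eigenfunction (citing a characterization of when the infimum is attained), whence $L1+\mu_0(L)=0$ and $B1=0$ give the rigidity. You instead transplant the Perdomo-type argument of Theorem~\ref{thm:closed-lambda0}: take the positive first eigenfunction $u$, integrate $\Delta\ln u$, and use the Robin condition $\partial_\nu u/u=W-a\kappa_g$ together with boundary Gauss--Bonnet; your sign bookkeeping is right, the $a\int_{\partial\Sigma}\kappa_g$ terms do cancel, and the resulting identity
\[
\mu_0(L)\,|\Sigma|=-\int_\Sigma V-\int_{\partial\Sigma}W+2\pi a\,\chi(\Sigma)-\int_\Sigma|\nabla\ln u|^2
\]
is a valid boundary analogue of \eqref{eq:closed-identity}. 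Your approach costs more (existence, positivity and regularity of the first Robin eigenfunction --- and note that with $V\in L^\infty$ one only gets $u\in W^{2,p}$ rather than $C^\infty$, though the identity and the divergence theorem still apply at that regularity, exactly as in the paper's closed case), but it buys an exact identity rather than just an inequality: the equality analysis becomes self-contained (no appeal to a result characterizing minimizers of the Rayleigh quotient), and the identity would also yield a Robin analogue of Corollary~\ref{cor:closed-stable}(1), namely $\int_\Sigma V+\int_{\partial\Sigma}W\le 2\pi a\,\chi(\Sigma)$ whenever $\mu_0(L)\ge 0$. Both proofs are valid; the paper's is shorter, yours is more informative.
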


\begin{proof}
Test \eqref{eq:Rayleigh-mu0} with $u\equiv 1$. Since
$V\ge \essinf_\Sigma V$ a.e.\ and $W\ge \essinf_{\partial\Sigma}W$ a.e.\ on $\partial\Sigma$,
and by Gauss--Bonnet,
\[
\int_\Sigma K+\int_{\partial\Sigma}\kappa_g=2\pi\chi(\Sigma),
\]
we obtain the stated inequality.

If equality holds, then $u\equiv 1$ is a first eigenfunction (cf. \cite[Theorem~2.4]{BCM2024}),
hence $L1+\mu_0(L)=0$ and $B1=0$, which imply $V$ and $W$ are constant; if $a\neq 0$ this forces
$K$ and $\kappa_g$ to be constant as well.
\end{proof}

\begin{corollary} \label{cor:Robin-stable}
Assume $\mu_0(L)\ge 0$ in \eqref{eq:Robin-curv}. If $a>0$, $V\ge 0$ on $\Sigma$, and $W\ge 0$
on $\partial\Sigma$, then $\Sigma$ is either a disk or an annulus.
\end{corollary}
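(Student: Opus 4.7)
The plan is to apply Theorem~\ref{thm:Robin-mu0} directly and turn the resulting inequality into a topological obstruction. Under the hypotheses $\mu_0(L)\ge 0$, $V\ge 0$ on $\Sigma$, and $W\ge 0$ on $\partial\Sigma$, the essential infima satisfy $\essinf_\Sigma V\ge 0$ and $\essinf_{\partial\Sigma}W\ge 0$, so both correction terms $-(\essinf_\Sigma V)|\Sigma|$ and $-(\essinf_{\partial\Sigma}W)|\partial\Sigma|$ in the bound of Theorem~\ref{thm:Robin-mu0} are non-positive. Combining this with $\mu_0(L)|\Sigma|\ge 0$ gives
\[
0\;\le\;\mu_0(L)\,|\Sigma|\;\le\;2\pi a\,\chi(\Sigma),
\]
and since $a>0$ we conclude $\chi(\Sigma)\ge 0$.

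The second step is purely topological. Since $\Sigma$ is a compact oriented surface with non-empty boundary, we have $r\ge 1$ and $\chi(\Sigma)=2-2\gamma-r$. The inequality $2-2\gamma-r\ge 0$ combined with $\gamma\ge 0$ and $r\ge 1$ forces $(\gamma,r)\in\{(0,1),(0,2)\}$, i.e.\ $\Sigma$ is either a disk or an annulus, as claimed.

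There is no substantive obstacle here: the whole statement reduces to observing that the hypotheses eliminate every negative contribution in the Robin bound of Theorem~\ref{thm:Robin-mu0}, leaving only the Euler characteristic, and then applying the classification of compact orientable surfaces with boundary. The only point to be a little careful about is that $r\ge 1$ is guaranteed by the standing assumption of Section~\ref{sec:Robin} that $\partial\Sigma\neq\emptyset$, which excludes the torus case $(\gamma,r)=(1,0)$ that would otherwise also satisfy $\chi\ge 0$.
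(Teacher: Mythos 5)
Your proof is correct and follows essentially the same route as the paper: apply Theorem~\ref{thm:Robin-mu0}, use the sign hypotheses to reduce the bound to $0\le 2\pi a\,\chi(\Sigma)$, and then classify via $\chi(\Sigma)=2-2\gamma-r$ with $r\ge 1$. Your explicit remark that $r\ge 1$ excludes the torus is a minor but accurate point of care that the paper handles implicitly.
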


\begin{proof}
From Theorem~\ref{thm:Robin-mu0},
$0\le \mu_0(L)|\Sigma|\le 2\pi a\,\chi(\Sigma)$, hence $\chi(\Sigma)\ge 0$. Since
$\chi(\Sigma)=2-2\gamma-r$ with $r\ge 1$, we must have $\gamma=0$ and $r\in\{1,2\}$.
\end{proof}

\subsection{The second Robin eigenvalue: proof of Theorem~\ref{thm:intro-mu1}}\label{subsec:Robin-mu1}

We now specialize to $a=0$, i.e.\ $L_0=\Delta+V$, and consider the Robin problem
\eqref{eq:Robin-intro}. Its quadratic form is
\begin{equation}\label{eq:Q0}
Q[u]=\int_\Sigma |\nabla u|^2-\int_\Sigma V u^2-\int_{\partial\Sigma}W u^2.
\end{equation}
Let $u_0>0$ be a first eigenfunction, and recall the min--max characterization
\begin{equation}\label{eq:Rayleigh-mu1}
\mu_1
=\inf\left\{\frac{Q[u]}{\int_\Sigma u^2}\,:\,u\in H^1(\Sigma)\setminus\{0\},\
\int_\Sigma u\,u_0=0\right\}.
\end{equation}

\medskip 

Let $\widehat\Sigma$ be the closed surface obtained by conformally capping each boundary
component of $\Sigma$ by a disc. Then $\widehat\Sigma$ has the same genus $\gamma$ as $\Sigma$.
A Hersch-type balancing argument on $\widehat\Sigma$ yields the following lemma proved by Chen, Fraser and Pang \cite{ChenFraserPang}.

\begin{lemma}[Chen--Fraser--Pang]\label{lem:hersch}
Let $u_0>0$ be a first eigenfunction on $\Sigma$. There exists a conformal map
$\Phi:\widehat\Sigma\to \mathbb S^2\subset\mathbb R^3$ such that
\[
\int_\Sigma \Phi\,u_0=0\in\mathbb R^3
\qquad\text{and}\qquad
\deg(\Phi)\le \Big\lfloor\frac{\gamma+3}{2}\Big\rfloor.
\]
\end{lemma}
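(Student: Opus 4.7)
\medskip

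\noindent\textbf{Proof plan.} The plan is to follow the classical Hersch balancing scheme, applied to a low-degree conformal branched cover of $\widehat\Sigma$, with the weighted measure $u_{0}\,dA$ replacing the usual area measure.

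\medskip

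\noindent\emph{Step 1: a conformal branched cover of bounded degree.} First I would invoke the classical existence result (a consequence of Brill--Noether/the Meis--Gabard bound on gonality, used in this spectral context by Yang--Yau and El Soufi--Ilias, and in a Robin setting by Ros--Vergasta and Chen--Fraser--Pang): any closed Riemann surface of genus $\gamma$ admits a non-constant holomorphic map $\Phi_{0}:\widehat\Sigma\to \mathbb S^{2}\cong\mathbb{CP}^{1}$ of degree at most $\lfloor(\gamma+3)/2\rfloor$. Since $\widehat\Sigma$ inherits a conformal structure from $\Sigma$ via the conformal capping, this gives the base map on which the balancing will be performed. The branch locus is finite, hence of measure zero in $\Sigma$.

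\medskip

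\noindent\emph{Step 2: the M\"obius family and the balancing map.} Next I would parametrize the conformal automorphisms of $\mathbb S^{2}$ by the open unit ball $\mathbb B^{3}\subset\mathbb R^{3}$: for each $\xi\in\mathbb B^{3}$ let $g_{\xi}:\mathbb S^{2}\to\mathbb S^{2}$ be the (unique up to rotation fixing $\xi/|\xi|$) M\"obius transformation extending the hyperbolic translation of $\mathbb B^{3}$ sending $0$ to $\xi$, normalized so that $g_{0}=\mathrm{id}$ and $g_{\xi}\to\xi/|\xi|$ pointwise a.e.\ as $|\xi|\to 1$. Set $\Phi_{\xi}=g_{\xi}\circ\Phi_{0}$ and define
\[
F(\xi)\;=\;\frac{1}{\int_{\Sigma}u_{0}\,dA}\int_{\Sigma}\Phi_{\xi}\,u_{0}\,dA\ \in\ \overline{\mathbb B^{3}},
\]
which is well defined and continuous in $\xi\in\mathbb B^{3}$ because $\Phi_{\xi}$ takes values in $\mathbb S^{2}\subset\overline{\mathbb B^{3}}$ and $u_{0}>0$ on $\overline\Sigma$ by the Hopf lemma (applied to the first Robin eigenfunction).

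\medskip

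\noindent\emph{Step 3: boundary behavior and existence of a zero.} The crucial step is the boundary analysis as $\xi\to\eta\in\mathbb S^{2}$. Since $\Phi_{0}^{-1}(-\eta)$ is finite (so of $u_{0}\,dA$-measure zero), the concentration property $g_{\xi}\to\eta$ pointwise on $\mathbb S^{2}\setminus\{-\eta\}$ together with dominated convergence yields $F(\xi)\to\eta$; hence $F$ extends continuously to $\overline{\mathbb B^{3}}$ with $F|_{\mathbb S^{2}}=\mathrm{id}$. If $F$ were nowhere zero in $\mathbb B^{3}$, the map $\xi\mapsto F(\xi)/|F(\xi)|$ would give a continuous retraction of $\overline{\mathbb B^{3}}$ onto $\mathbb S^{2}$, contradicting Brouwer. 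Thus there exists $\xi^{*}\in\mathbb B^{3}$ with $F(\xi^{*})=0$, and $\Phi:=\Phi_{\xi^{*}}$ satisfies $\int_{\Sigma}\Phi\,u_{0}\,dA=0$. The degree is preserved: $\deg(\Phi)=\deg(g_{\xi^{*}})\cdot\deg(\Phi_{0})=\deg(\Phi_{0})\le\lfloor(\gamma+3)/2\rfloor$.

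\medskip

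\noindent\emph{Main obstacle.} The delicate point is the boundary behavior of $F$: one is integrating the M\"obius-translated maps against the weighted measure $u_{0}\,dA$ on $\Sigma$, not the uniform area measure on $\widehat\Sigma$, so one must verify that the bad set $\Phi_{0}^{-1}(-\eta)$ remains negligible for every $\eta\in\mathbb S^{2}$ and that the convergence $F(\xi)\to\eta$ is uniform in $\eta$ along radial sequences. Both follow from the finiteness of $\Phi_{0}^{-1}(-\eta)$, the strict positivity and continuity of $u_{0}$ up to $\partial\Sigma$, and standard estimates for the concentration of the M\"obius family $\{g_{\xi}\}$; the rest of the argument is then the usual Hersch topological degree step.
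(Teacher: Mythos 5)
Your argument is correct and is essentially the standard Hersch balancing proof that the paper attributes to Chen--Fraser--Pang; the paper itself only cites the lemma and does not reprove it. The three ingredients you use --- the Brill--Noether/Meis gonality bound giving $\deg(\Phi_0)\le\lfloor(\gamma+3)/2\rfloor$, the M\"obius family parametrized by $\mathbb B^3$, and the Brouwer no-retraction step applied to the center of mass with respect to the weighted measure $u_0\,dA$ on $\Sigma$ --- are exactly those of the cited argument, and your handling of the boundary concentration (finiteness of $\Phi_0^{-1}(-\eta)$ plus dominated convergence) is the right way to close the one delicate point.
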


Write $\Phi=(\phi_1,\phi_2,\phi_3)$. Then $\sum_{i=1}^3\phi_i^2\equiv 1$ on $\Sigma$, and
\begin{equation}\label{eq:Dir-energy}
\sum_{i=1}^3\int_{\Sigma}|\nabla\phi_i|^2
<
\sum_{i=1}^3\int_{\widehat\Sigma}|\nabla\phi_i|^2
\le
8\pi\Big\lfloor\frac{\gamma+3}{2}\Big\rfloor,
\end{equation}
since the Dirichlet energy of a conformal map to $\mathbb S^2$ equals $2$ times the area of the
image, hence $8\pi$ times the degree, and restriction from $\widehat\Sigma$ to $\Sigma$ strictly
decreases energy unless $\partial\Sigma=\emptyset$.

Moreover, Lemma~\ref{lem:hersch} gives the orthogonality conditions
\begin{equation}\label{eq:balance}
\int_\Sigma u_0\,\phi_i=0,\qquad i=1,2,3.
\end{equation}
Thus each $\phi_i$ is admissible in \eqref{eq:Rayleigh-mu1}, and
\begin{equation}\label{eq:mu1-phi}
\mu_1\int_\Sigma \phi_i^2
\le
\int_\Sigma|\nabla\phi_i|^2-\int_\Sigma V\phi_i^2-\int_{\partial\Sigma}W\phi_i^2.
\end{equation}

\begin{theorem}\label{thm:upper-bound-mu1}
Let $(\Sigma^2,ds^2)$ be a compact oriented surface with boundary of genus $\gamma$, and let
$\mu_1$ be the second eigenvalue of \eqref{eq:Robin-intro}. Then
\[
\mu_1\,|\Sigma|
\;<\;
8\pi\Big\lfloor\frac{\gamma+3}{2}\Big\rfloor
\;-\;\int_\Sigma V\;-\;\int_{\partial\Sigma}W.
\]
\end{theorem}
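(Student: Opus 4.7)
\medskip
\noindent\textbf{Proof proposal.} The strategy is to use the three components $\phi_1,\phi_2,\phi_3$ from Lemma~\ref{lem:hersch} as test functions in the min--max characterization \eqref{eq:Rayleigh-mu1}, and then sum the resulting inequalities so that the constraint $\phi_1^2+\phi_2^2+\phi_3^2\equiv 1$ on $\Sigma$ produces exactly $|\Sigma|$ on the left and $\int_\Sigma V$ and $\int_{\partial\Sigma}W$ on the right. All the ingredients we need are already available in \eqref{eq:Dir-energy}--\eqref{eq:mu1-phi}, so the argument reduces to a clean bookkeeping step.

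First, I would sum \eqref{eq:mu1-phi} over $i=1,2,3$. On the left, using $\sum_{i=1}^{3}\phi_i^2\equiv 1$, the sum $\mu_1\sum_i\int_\Sigma\phi_i^2$ collapses to $\mu_1\,|\Sigma|$. On the right, the potential terms similarly simplify: $\sum_i\int_\Sigma V\phi_i^2=\int_\Sigma V$ and $\sum_i\int_{\partial\Sigma}W\phi_i^2=\int_{\partial\Sigma}W$. This yields
\[
\mu_1\,|\Sigma|
\;\le\;
\sum_{i=1}^{3}\int_\Sigma |\nabla\phi_i|^2
\;-\;\int_\Sigma V
\;-\;\int_{\partial\Sigma}W.
\]

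Next, I would insert the conformal energy bound \eqref{eq:Dir-energy}. The middle inequality there, $\sum_i\int_{\widehat\Sigma}|\nabla\phi_i|^2\le 8\pi\lfloor(\gamma+3)/2\rfloor$, comes from the standard identity that the Dirichlet energy of a conformal map to $\mathbb{S}^2$ equals twice the (counted-with-multiplicity) area of the image, i.e.\ $8\pi\deg(\Phi)$, combined with the degree bound in Lemma~\ref{lem:hersch}. The first inequality in \eqref{eq:Dir-energy} is strict because $\partial\Sigma\neq\emptyset$, so some Dirichlet energy of $\Phi$ lives on the capping discs $\widehat\Sigma\setminus\Sigma$, and $\Phi$ is nonconstant there (otherwise $\Phi$ would be constant on $\Sigma$, contradicting $\sum\phi_i^2=1$ together with the balancing \eqref{eq:balance} forcing $\int_\Sigma u_0\Phi=0$ with $u_0>0$). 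Chaining the two inequalities gives the strict bound
\[
\mu_1\,|\Sigma|
\;<\;
8\pi\Big\lfloor\frac{\gamma+3}{2}\Big\rfloor
\;-\;\int_\Sigma V
\;-\;\int_{\partial\Sigma}W,
\]
which is exactly the claim.

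The only genuinely nontrivial point is justifying the strict inequality in \eqref{eq:Dir-energy}, and thereby the strict inequality in the theorem. The inequality $\sum_i\int_\Sigma|\nabla\phi_i|^2<\sum_i\int_{\widehat\Sigma}|\nabla\phi_i|^2$ would fail only if $\Phi$ were constant on every capping disc, which by connectedness and continuity would force $\Phi$ to be constant on all of $\widehat\Sigma$; but then $\sum\phi_i^2\equiv c$ constant on $\mathbb{S}^2$ forces $c=1$ with $\Phi$ a fixed point $p\in\mathbb{S}^2$, and the balancing relation $\int_\Sigma u_0\,\Phi=0$ combined with $u_0>0$ gives $p=0$, contradicting $|p|=1$. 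Everything else is algebraic, so I expect no further obstacles.
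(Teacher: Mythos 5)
Your proof is correct and follows essentially the same route as the paper: sum \eqref{eq:mu1-phi} over $i=1,2,3$, use $\sum_i\phi_i^2\equiv 1$ to collapse the potential terms, and invoke the energy bound \eqref{eq:Dir-energy} for the strict inequality. The only cosmetic point is that your justification of strictness should appeal to the identity theorem for conformal (holomorphic) maps --- not mere ``connectedness and continuity'' --- to pass from ``$\Phi$ constant on a capping disc'' to ``$\Phi$ constant on $\widehat\Sigma$''; with that adjustment the argument matches the paper's.
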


\begin{proof}
Sum \eqref{eq:mu1-phi} over $i=1,2,3$ and use $\sum_i\phi_i^2\equiv 1$:
\[
\mu_1|\Sigma|
\le
\sum_{i=1}^3\int_\Sigma|\nabla\phi_i|^2
-\int_\Sigma V
-\int_{\partial\Sigma}W.
\]
Apply \eqref{eq:Dir-energy} to conclude the strict inequality.
\end{proof}

\subsubsection{The case of planar domains}\label{subsubsec:planar-domains}

It is natural to seek upper estimates for $\mu_1$ that also reflect the number $r$ of boundary components.
To incorporate $r$, we adapt a hemisphere mapping in the spirit of Nunes' modified Hersch argument for free-boundary stable CMC surfaces (see \cite{Nunes2017}). 
By a result of Gabard \cite{Gabard2006}, there exists a proper conformal branched cover
\[
\Phi:\Sigma\longrightarrow \mathbb{D}\cong\mathbb{S}^2_+\subset\mathbb{R}^3,
\qquad \deg\Phi\le \gamma+r,
\]
and, after applying a conformal automorphism of $\mathbb{S}^2_+$ (see also \cite{Mendes2018}), we obtain the balancing conditions
\begin{equation}\label{eq:balance-hemisphere}
\int_\Sigma u_0\,\phi_i=0,\qquad i=1,2,
\end{equation}
where $\Phi=(\phi_1,\phi_2,\phi_3)$ with $\phi_1^2+\phi_2^2+\phi_3^2=1$ on $\Sigma$ and $\phi_3=0$ on $\partial\Sigma$.

Thus $\phi_1,\phi_2$ are admissible in \eqref{eq:Rayleigh-mu1}, yielding
\begin{equation}\label{eq:mu1-phi12-hemisphere}
\mu_1\int_\Sigma \phi_i^2
\;\le\;
\int_\Sigma|\nabla\phi_i|^2
-\int_\Sigma V\,\phi_i^2
-\int_{\partial\Sigma}W\,\phi_i^2,
\qquad i=1,2.
\end{equation}

The Dirichlet energy is controlled by
\begin{equation}\label{energy-hemisphere}
\sum_{i=1}^3\int_\Sigma|\nabla\phi_i|^2\;\le\;4\pi(\gamma+r).
\end{equation}

To also exploit $\phi_3$, which vanishes on $\partial\Sigma$, consider the Dirichlet problem
\begin{equation}\label{eq:Dirichlet}
\left\{
\begin{aligned}
\Delta u+V u+\lambda^D u&=0&&\text{in }\Sigma,\\
u&=0&&\text{on }\partial\Sigma,
\end{aligned}
\right.
\end{equation}
and denote its first eigenvalue by $\lambda_0^D(L_0)$, or simply by $\lambda_0^D.$

\medskip

In the case of domains in Euclidean space (thus $\gamma=0$), Filonov’s method \cite{Filonov} yields, in particular, the comparison
\[
\mu_1 \;<\; \lambda_0^D
\]
for the Euclidean Laplacian with $V\equiv 0$ and $W$ with nonnegative average, which is the situation we use below. 

\begin{theorem}\label{thm:upper-bound-mu1-boundary}
Assume $\Sigma\subset\mathbb{R}^2$ is a bounded planar domain with $r$ boundary components, $V\equiv 0$, and $\int_{\partial\Sigma}W\ge 0$. Then
\[
\mu_1\,|\Sigma|
\;<\;
4\pi r\;-\;\int_{\partial\Sigma}W.
\]
\end{theorem}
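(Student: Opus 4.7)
The plan is to run a Hersch-type argument in the spirit of the proof of Theorem~\ref{thm:upper-bound-mu1}, but with a hemisphere map in place of a sphere map. This halves the topological energy budget from $8\pi\lfloor(\gamma+3)/2\rfloor$ to $4\pi r$ while forcing the third coordinate of the test map to vanish on $\partial\Sigma$. The pay-off of this boundary alignment is that $\phi_1^2+\phi_2^2\equiv 1$ on $\partial\Sigma$, so the contribution of $W$ along the boundary collapses cleanly to $\int_{\partial\Sigma}W$. The cost is that only the first two components are admissible for $\mu_1$ after balancing, so the third component $\phi_3$ must be recovered through a Filonov-type comparison with the Dirichlet problem.

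Concretely, I would first invoke Gabard's conformal branched cover $\Phi=(\phi_1,\phi_2,\phi_3):\Sigma\to\overline{\mathbb{S}^2_+}$ of degree at most $\gamma+r=r$, normalized by a conformal automorphism of $\mathbb{S}^2_+$ to the balanced position $\int_\Sigma u_0\phi_i=0$ for $i=1,2$. This gives $\phi_3=0$ on $\partial\Sigma$, $\sum_{i=1}^3\phi_i^2\equiv 1$ on $\Sigma$, and the energy bound \eqref{energy-hemisphere}. Testing the Rayleigh characterization \eqref{eq:Rayleigh-mu1} against $\phi_1$ and $\phi_2$, summing, and substituting $\phi_1^2+\phi_2^2=1-\phi_3^2$ on $\Sigma$ and $=1$ on $\partial\Sigma$ yields, after rearrangement,
\[
\mu_1|\Sigma|\;\le\;\int_\Sigma\bigl(|\nabla\phi_1|^2+|\nabla\phi_2|^2\bigr)\;-\;\int_{\partial\Sigma}W\;+\;\mu_1\int_\Sigma\phi_3^2.
\]

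The final step is to absorb $\mu_1\int_\Sigma\phi_3^2$ into $\int_\Sigma|\nabla\phi_3|^2$, so that all three Dirichlet energies combine and the total bound $\sum_i\int_\Sigma|\nabla\phi_i|^2\le 4\pi r$ finishes the argument. This is where Filonov enters: since $\phi_3\in H^1_0(\Sigma)$, the Dirichlet--Poincar\'e inequality gives $\lambda_0^D\int\phi_3^2\le\int|\nabla\phi_3|^2$, and the hypotheses $V\equiv 0$ and $\int_{\partial\Sigma}W\ge 0$ license the strict comparison $\mu_1<\lambda_0^D$; together these yield $\mu_1\int\phi_3^2\le\int|\nabla\phi_3|^2$. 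The main obstacle I expect is bookkeeping the strictness of the final inequality across the sign cases of $\mu_1$: when $\mu_1>0$, strictness comes from the Filonov gap $\mu_1<\lambda_0^D$; when $\mu_1\le 0$, it comes from $\int_\Sigma|\nabla\phi_3|^2>0$, which holds because $\deg\Phi\ge 1$ forces $\Phi$ to cover the open hemisphere, making $\phi_3\in H^1_0(\Sigma)$ non-constant. In either case the strict bound $\mu_1|\Sigma|<4\pi r-\int_{\partial\Sigma}W$ follows.
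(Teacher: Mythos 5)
Your proposal is correct and follows essentially the same route as the paper: Gabard's degree-$\le r$ branched cover onto the hemisphere, Hersch balancing of $\phi_1,\phi_2$ against $u_0$, and Filonov's comparison $\mu_1<\lambda_0^D$ combined with the Dirichlet Rayleigh quotient to absorb the $\phi_3$ term, then the energy bound $4\pi(\gamma+r)$ with $\gamma=0$. Your explicit verification that $\phi_3\not\equiv 0$ (needed for strictness) is a point the paper leaves implicit, but otherwise the arguments coincide.
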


\begin{proof}
Using $\mu_1< \lambda_0^D$ and that $\phi_3=0$ on $\partial\Sigma$, we have
\[
\mu_1\int_\Sigma\phi_3^2
\;<\;
\int_\Sigma|\nabla\phi_3|^2.
\]
Adding this to \eqref{eq:mu1-phi12-hemisphere} for $i=1,2$ and using $\phi_1^2+\phi_2^2=1$ on $\partial\Sigma$, we obtain
\[
\mu_1\int_\Sigma(\phi_1^2+\phi_2^2+\phi_3^2)
\;<\;
\sum_{i=1}^3\! \int_\Sigma|\nabla\phi_i|^2
-\int_{\partial\Sigma}W.
\]
Since $\phi_1^2+\phi_2^2+\phi_3^2\equiv 1$, the claim follows from \eqref{energy-hemisphere} with $\gamma=0$.
\end{proof}

\begin{remark}\label{rem:FriedlanderFilonov}
The key extra input in the planar estimate above is the (strict) comparison
$\mu_1<\lambda_0^D$ (in the Euclidean case, under the sign assumption on $W$),
which allows us to exploit the third coordinate $\phi_3$ that vanishes on
$\partial\Sigma$. Any extension of such a comparison to more general settings
(e.g.\ domains in Riemannian surfaces and/or Schr\"odinger operators with
$V\not\equiv 0$) would yield corresponding refinements of
Theorem~\ref{thm:upper-bound-mu1-boundary}.
Inequalities comparing Dirichlet and Neumann spectra of Friedlander type and
their variants on Riemannian manifolds have been studied by Friedlander
\cite{Friedlander}, Arendt--Mazzeo \cite{ArendtMazzeo}, Rohleder \cite{Rohleder2021} and Ilias--Shouman
\cite{IliasShouman}, among others.
\end{remark}

\section{Application to free boundary minimal surfaces}\label{sec:FBMS}

In this section we prove Theorem~\ref{thm:intro-FBMS}. Let $M^3$ be complete and let
$B_o(R)\subset M$ be a geodesic ball of radius $R$ centered at $o$. Let
$\Sigma^2\subset B_o(R)$ be a compact two-sided free boundary minimal surface:
\[
\partial\Sigma\subset \partial B_o(R),
\qquad
\Sigma\ \text{meets }\partial B_o(R)\ \text{orthogonally along }\partial\Sigma.
\]
Let $N$ be a global unit normal, $A$ the shape operator of $\Sigma^2$, and
$\II^{\partial B_o(R)}$ the second fundamental form of $\partial B_o(R)$ with respect
to the outward unit normal. The index form is
\[
\mathcal I(u,u)
=\int_\Sigma |\nabla u|^2
-\int_\Sigma\big(\Ric_{M}(N,N)+|A|^2\big)u^2
-\int_{\partial\Sigma}\II^{\partial B_o(R)}(N,N)\,u^2.
\]

The quadratic form $\mathcal I$ is precisely the Robin form associated to the Jacobi operator
$J=\Delta+V$ with boundary term $W$, where
\[
V=\Ric_M(N,N)+|A|^2,
\qquad
W=\II^{\partial B_o(R)}(N,N).
\]
Let $\{\mu_k(J)\}_{k\ge 0}$ be the corresponding Robin eigenvalues, ordered as
$\mu_0(J)<\mu_1(J)\le\mu_2(J)\le\cdots$. We define the Morse index $\ind(\Sigma)$ as the
number of negative eigenvalues (counted with multiplicity). In particular,
\[
\ind(\Sigma)\le 1 \quad\Longleftrightarrow\quad \mu_1(J)\ge 0.
\]

\medskip

\subsection{Curvature consequences under pinching.}
Assume $M$ is a Cartan--Hadamard manifold satisfying 
\[
-\kappa_1\le \Sec_M\le -\kappa_2<0,\qquad 0<\kappa_2\le \kappa_1.
\]
Then for any unit vector $N$,
\[
-2\kappa_1\le \Ric_M(N,N)\le -2\kappa_2,
\]
hence
\[
V\ge |A|^2-2\kappa_1.
\]
Moreover, by Hessian comparison under $\Sec_M\le -\kappa_2$, the geodesic sphere
$\partial B_o(R)$ is strictly convex and satisfies
\[
\II^{\partial B_o(R)}\ \ge\ \sqrt{\kappa_2}\,\coth(\sqrt{\kappa_2}\,R)\,g_{\partial B_o(R)}.
\]
Since $N\in T(\partial B_o(R))$ along $\partial\Sigma$ by the free boundary condition,
\begin{equation}\label{eq:W-lower}
W=\II^{\partial B_o(R)}(N,N)\ \ge\ \sqrt{\kappa_2}\,\coth(\sqrt{\kappa_2}\,R)>0.
\end{equation}
Likewise, the geodesic curvature $\kappa_g$ of $\partial\Sigma$ in $\Sigma$ satisfies
\begin{equation}\label{eq:kg-lower}
\kappa_g=\II^{\partial B_o(R)}(T,T)\ \ge\ \sqrt{\kappa_{2}}\,\coth(\sqrt{\kappa_{2}}\,R),
\end{equation}
where $T$ is the unit tangent to $\partial\Sigma\subset\partial B_o(R)$

Finally, for a minimal surface ($H\equiv 0$), the Gauss equation gives
\[
K=\Sec_M(T\Sigma)-\frac{|A|^2}{2},
\]
so using $\Sec_M(T\Sigma)\ge -\kappa_1$ we obtain
\begin{equation}\label{eq:A2-vs-K}
-|A|^2=2K-2\Sec_M(T\Sigma)\ \le\ 2K+2\kappa_1.
\end{equation}

\begin{theorem}\label{thm:FBMS}
Let $M^{3}$ be a Cartan--Hadamard manifold whose sectional curvature satisfies
\[
-\kappa_{1}\le \Sec_{M}\le -\kappa_{2}<0,\qquad 0<\kappa_{2}\le \kappa_{1}.
\]
Fix $o\in M$ and $R>0$, and assume
\begin{equation}\label{eq:R-condition}
4\kappa_{1}\tanh\!\Big(\frac{\sqrt{\kappa_{2}}\,R}{2}\Big)
\ \le\
3\kappa_{2}\coth(\sqrt{\kappa_{2}}\,R).
\end{equation}
Let $\Sigma^{2}\subset B_{o}(R)$ be a compact two-sided free boundary minimal surface with
$\ind(\Sigma)\le 1$. Then the only possibilities are
\[
\gamma\in\{0,1\}\ \text{ with }\ r\le 3,
\qquad\text{or}\qquad
\gamma\in\{2,3\}\ \text{ with }\ r=1.
\]
\end{theorem}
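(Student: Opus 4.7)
The plan is to combine the second-eigenvalue bound of Theorem~\ref{thm:intro-mu1}, applied to the Jacobi operator $J=\Delta+V$ with Robin potential $W$, with the curvature comparisons of Section~\ref{sec:FBMS} and a free-boundary area--perimeter inequality. Since $\ind(\Sigma)\le 1$ is equivalent to $\mu_1(J)\ge 0$, Theorem~\ref{thm:intro-mu1} immediately yields the scalar inequality
\[
\int_\Sigma V+\int_{\partial\Sigma}W \;<\; 8\pi\Big\lfloor\frac{\gamma+3}{2}\Big\rfloor.
\]
The strategy is then to lower-bound the left-hand side by a quantity proportional to $|\partial\Sigma|$ whose coefficient is nonnegative under \eqref{eq:R-condition}, while the right-hand side, rewritten using $\chi(\Sigma)=2-2\gamma-r$, becomes nonpositive for every $(\gamma,r)$ outside the conclusion.

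For the lower bound on $\int_\Sigma V$ I would start from the Gauss equation $|A|^2=2(\Sec_M(T\Sigma)-K)$ and the pinching inequalities $\Sec_M\ge-\kappa_1$, $\Ric_M(N,N)\ge-2\kappa_1$, which together give $V\ge -4\kappa_1-2K$ pointwise. Gauss--Bonnet $\int_\Sigma K=2\pi\chi(\Sigma)-\int_{\partial\Sigma}\kappa_g$ then converts the curvature integral into a boundary contribution, and the Hessian-comparison bounds \eqref{eq:W-lower}, \eqref{eq:kg-lower} yield matching lower bounds $W,\kappa_g\ge \sqrt{\kappa_2}\coth(\sqrt{\kappa_2}R)$ on $\partial\Sigma$. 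Altogether,
\[
\int_\Sigma V+\int_{\partial\Sigma}W \;\ge\; -4\kappa_1|\Sigma|-4\pi\chi(\Sigma)+3\sqrt{\kappa_2}\coth(\sqrt{\kappa_2}R)\,|\partial\Sigma|.
\]

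To trade the residual $|\Sigma|$ for $|\partial\Sigma|$, I would test the ambient function $\cosh(\sqrt{\kappa_2}\,r)$, $r=d_M(\cdot,o)$, against the divergence theorem on $\Sigma$. Hessian comparison under $\Sec_M\le-\kappa_2$ combined with minimality gives the subharmonicity $\Delta_\Sigma\cosh(\sqrt{\kappa_2}r)\ge 2\kappa_2\cosh(\sqrt{\kappa_2}r)$, while the free-boundary condition forces $\partial_\nu r\equiv 1$ on $\partial\Sigma$; integrating produces an area--perimeter inequality of the shape $|\Sigma|\le C(R,\kappa_2)\,|\partial\Sigma|$. Substituting this back yields an inequality
\[
\Phi(R,\kappa_1,\kappa_2)\,|\partial\Sigma| \;<\; 8\pi\Big\lfloor\frac{\gamma+3}{2}\Big\rfloor+4\pi\chi(\Sigma),
\]
and a direct tabulation using $\chi=2-2\gamma-r$ shows that the right-hand side is $\le 0$ for every pair $(\gamma,r)$ outside the listed set, giving the desired contradiction once $\Phi\ge 0$.

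The main obstacle will be calibrating the area--perimeter estimate so that the threshold $\Phi\ge 0$ matches exactly the $\tanh(\sqrt{\kappa_2}R/2)$ form of \eqref{eq:R-condition}, rather than the coarser $\sinh$-type condition produced by the naive argument above. I expect this to require combining the basic divergence-theorem bound with the stronger pointwise control $K\le-\kappa_2$ (which upgrades Gauss--Bonnet to $\int_{\partial\Sigma}\kappa_g\ge 2\pi\chi(\Sigma)+\kappa_2|\Sigma|$) and then using the identity $\sinh(x)=(1+\cosh x)\tanh(x/2)$ to recast the coefficient of $|\partial\Sigma|$ in the desired linear-in-$\tanh(\sqrt{\kappa_2}R/2)$ form. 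Once this matching is achieved, the final case-by-case enumeration of $(\gamma,r)$ reduces to the short direct computation that $8\pi\lfloor(\gamma+3)/2\rfloor+4\pi\chi(\Sigma)$ is positive only for the four allowed pairs.
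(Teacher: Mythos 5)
Your proposal tracks the paper's proof almost step for step: the same application of Theorem~\ref{thm:intro-mu1} to the Jacobi operator via $\ind(\Sigma)\le 1\Leftrightarrow\mu_1(J)\ge 0$, the same pointwise bounds $V\ge -4\kappa_1-2K$ (Gauss equation plus $\Ric_M(N,N)\ge-2\kappa_1$) and $W,\kappa_g\ge\sqrt{\kappa_2}\coth(\sqrt{\kappa_2}R)$, the same Gauss--Bonnet step producing the coefficient $3$ on the boundary term, a linear area--perimeter inequality to absorb $4\kappa_1|\Sigma|$, and the same final enumeration $2\lfloor(\gamma+3)/2\rfloor+2-2\gamma-r>0$. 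All of that is correct (a minor slip: the allowed set consists of eight pairs $(\gamma,r)$, not four, but your tabulation is the right one).

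The one genuine gap is the area--perimeter inequality, and you have correctly located it yourself. The paper invokes Lee--Seo's linear isoperimetric inequality $|\Sigma|\le\frac{1}{\sqrt{\kappa_2}}\tanh\bigl(\frac{\sqrt{\kappa_2}R}{2}\bigr)|\partial\Sigma|$, whose constant is exactly what makes hypothesis \eqref{eq:R-condition} suffice. Your divergence-theorem computation with $\cosh(\sqrt{\kappa_2}\rho)$ only yields $|\Sigma|\le\frac{\sinh(\sqrt{\kappa_2}R)}{2\sqrt{\kappa_2}}|\partial\Sigma|$; since $\frac{\sinh x}{2}=\cosh^2(x/2)\tanh(x/2)\ge\tanh(x/2)$, this is strictly weaker and would prove the theorem only under the more restrictive hypothesis $2\kappa_1\sinh(\sqrt{\kappa_2}R)\le 3\kappa_2\coth(\sqrt{\kappa_2}R)$, not under \eqref{eq:R-condition}. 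Your proposed repair --- feeding $K\le-\kappa_2$ back into Gauss--Bonnet and invoking $\sinh x=(1+\cosh x)\tanh(x/2)$ --- does not close this: the Gauss--Bonnet budget has already been spent on bounding $\int_\Sigma V$, and no algebraic identity upgrades the $\sinh$ constant to the $\tanh$ one. The correct elementary route is to apply the divergence theorem to the radial field $Y=\frac{1}{\sqrt{\kappa_2}}\tanh\bigl(\frac{\sqrt{\kappa_2}\rho}{2}\bigr)\nabla\rho$ instead of $\nabla\cosh(\sqrt{\kappa_2}\rho)$: writing $t=\tanh(\sqrt{\kappa_2}\rho/2)$ and $s=|\nabla^\Sigma\rho|^2\le1$, Hessian comparison and minimality give $\operatorname{div}_\Sigma Y\ge\frac{1-t^2}{2}s+\frac{1+t^2}{2}(2-s)=1+t^2(1-s)\ge1$, while $\langle Y,\nu\rangle\le\frac{1}{\sqrt{\kappa_2}}\tanh\bigl(\frac{\sqrt{\kappa_2}R}{2}\bigr)$ on $\partial\Sigma$; this is precisely the Lee--Seo bound the paper cites. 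With that substitution your argument coincides with the paper's.
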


\begin{proof}
Let $\mu_1$ be the second Robin eigenvalue of the Jacobi operator $J=\Delta+V$ with boundary
term $W$ as above. Since $\ind(\Sigma)\le 1$, we have $\mu_1\ge 0$. Theorem~\ref{thm:upper-bound-mu1}
gives
\[
0\le \mu_1|\Sigma|
<
8\pi\Big\lfloor\frac{\gamma+3}{2}\Big\rfloor
-\int_\Sigma V
-\int_{\partial\Sigma}W.
\]
Using $V\ge |A|^2-2\kappa_1$ and \eqref{eq:W-lower},
\begin{equation}\label{eq:FBMS-1}
0<
8\pi\Big\lfloor\frac{\gamma+3}{2}\Big\rfloor
-\int_\Sigma(|A|^2-2\kappa_1)
-\sqrt{\kappa_2}\coth(\sqrt{\kappa_2}R)\,|\partial\Sigma|.
\end{equation}

Next, from \eqref{eq:A2-vs-K},
\[
-\int_\Sigma(|A|^2-2\kappa_1)
=-\int_\Sigma|A|^2+2\kappa_1|\Sigma|
\le
2\int_\Sigma K+4\kappa_1|\Sigma|.
\]
Plugging into \eqref{eq:FBMS-1},
\begin{equation}\label{eq:FBMS-2}
0<
8\pi\Big\lfloor\frac{\gamma+3}{2}\Big\rfloor
+2\int_\Sigma K
+4\kappa_1|\Sigma|
-\sqrt{\kappa_2}\coth(\sqrt{\kappa_2}R)\,|\partial\Sigma|.
\end{equation}

By Gauss--Bonnet and \eqref{eq:kg-lower},
\[
\int_\Sigma K
=
2\pi(2-2\gamma-r)-\int_{\partial\Sigma}\kappa_g
\le
2\pi(2-2\gamma-r)-\sqrt{\kappa_2}\coth(\sqrt{\kappa_2}R)\,|\partial\Sigma|.
\]
Substituting into \eqref{eq:FBMS-2} yields
\begin{equation}\label{eq:FBMS-3}
0<
8\pi\Big\lfloor\frac{\gamma+3}{2}\Big\rfloor
+4\pi(2-2\gamma-r)
+4\kappa_1|\Sigma|
-3\sqrt{\kappa_2}\coth(\sqrt{\kappa_2}R)\,|\partial\Sigma|.
\end{equation}

Finally, apply the linear isoperimetric inequality of Lee--Seo \cite[Theorem~2.10(ii)]{LeeSeo2023}
(with $n=2$, $k=\sqrt{\kappa_2}$, and $H=0$):
\[
|\Sigma|\le \frac{1}{\sqrt{\kappa_2}}\tanh\!\Big(\frac{\sqrt{\kappa_2}R}{2}\Big)\,|\partial\Sigma|.
\]
Therefore,
\[
4\kappa_1|\Sigma|
\le
\frac{4\kappa_1}{\sqrt{\kappa_2}}\tanh\!\Big(\frac{\sqrt{\kappa_2}R}{2}\Big)\,|\partial\Sigma|.
\]
Substitute into \eqref{eq:FBMS-3} and use \eqref{eq:R-condition} to drop the resulting nonpositive
boundary-length term. We obtain
\[
2\Big\lfloor\frac{\gamma+3}{2}\Big\rfloor+2-2\gamma-r>0.
\]
Writing $\gamma=2m$ or $\gamma=2m+1$ shows
$2\lfloor(\gamma+3)/2\rfloor+2-2\gamma=4-2m$, hence $4-2m-r>0$, i.e.\ $r\le 3-2m$. Thus:
$m=0$ gives $\gamma\in\{0,1\}$ and $r\le 3$, while $m=1$ gives $\gamma\in\{2,3\}$ and $r\le 1$,
which completes the proof.
\end{proof}

\begin{remark}
Since $\tanh(\sqrt{\kappa_2}R/2)\nearrow 1$ and $\coth(\sqrt{\kappa_2}R)\searrow 1$ as $R\to\infty$,
the limiting form of \eqref{eq:R-condition} would be $4\kappa_1\le 3\kappa_2$.
Under our convention $\kappa_1\ge\kappa_2$, this cannot hold; hence \eqref{eq:R-condition} is
necessarily a \emph{small-radius} requirement. For $R\to 0$, one has
$\tanh(\sqrt{\kappa_2}R/2)\sim \sqrt{\kappa_2}R/2$ and $\coth(\sqrt{\kappa_2}R)\sim 1/(\sqrt{\kappa_2}R)$,
so \eqref{eq:R-condition} holds for $R$ sufficiently small (depending on $\kappa_1,\kappa_2$).
\end{remark}

\section{A Jacobi--Steklov framework}\label{sec:Steklov}

In this section we study the Steklov-type eigenvalue problem associated with the pair
$(L_0,B_0)$:
\begin{equation}\label{eq:Steklov}
\left\{
\begin{array}{rcll}
L_0u&=&0&\text{in }\Sigma,\\[1mm]
B_0u&=&\sigma u&\text{on }\partial\Sigma,
\end{array}
\right.
\qquad
L_0=\Delta+V,\quad B_0=\partial_\nu-W,
\end{equation}
where $\Sigma$ is a compact oriented surface with smooth boundary and
$V\in L^\infty(\Sigma)$, $W\in L^\infty(\partial\Sigma)$.
Related Steklov-type problems arising from free boundary minimal surface geometry
are discussed, for instance, by Lima--Menezes~\cite{LimaMenezes}.

\subsection{Coercivity and the Dirichlet-to-Neumann map}\label{subsec:DtN}

Unlike the classical Steklov problem (where $V\equiv 0$ and $W\equiv 0$), the Dirichlet problem for
$L_0$ may fail to be solvable for arbitrary boundary data. Consequently, the corresponding
Dirichlet-to-Neumann map may only be defined on a proper subspace of boundary traces. This issue is
analyzed in detail by Tran~\cite{Tran} in a slightly more general framework. To avoid such domain
restrictions, we work in a coercive regime that guarantees well-posedness for all boundary data,
namely positivity of the first Dirichlet eigenvalue.

Consider the Dirichlet problem
\begin{equation*}
\left\{
\begin{aligned}
\Delta u+Vu+\lambda^D u&=0&&\text{in }\Sigma,\\
u&=0&&\text{on }\partial\Sigma.
\end{aligned}
\right.
\end{equation*}
Let $\lambda_0^D(L_0)$ denote its first eigenvalue.

Assume henceforth that $\lambda_0^D(L_0)>0$. Then the interior quadratic form
\[
Q^D[u]:=\int_\Sigma |\nabla u|^2-\int_\Sigma Vu^2
\]
is coercive on $H_0^1(\Sigma)$. In particular, for each boundary datum
$h\in H^{1/2}(\partial\Sigma)$ there exists a unique minimizer of $Q^D$ among all
$H^1$-extensions of $h$. This minimizer is precisely the weak $L_0$-harmonic extension of $h$, and
it provides a canonical definition of the Dirichlet-to-Neumann map.

\begin{proposition}\label{prop:Dirichlet-wellposed}
Assume $\lambda_0^D(L_0)>0$. Given $h\in H^{1/2}(\partial\Sigma)$, consider the affine space
\[
\mathcal A_h:=\{u\in H^1(\Sigma): u|_{\partial\Sigma}=h\}.
\]
Then there exists a unique $\widehat h\in \mathcal A_h$ such that
\[
Q^D[\widehat h]=\min_{u\in \mathcal A_h} Q^D[u].
\]
Moreover, $\widehat h$ is the unique weak solution of $L_0u=0$ in $\Sigma$ with trace $h$.
\end{proposition}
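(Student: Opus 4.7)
The plan is to recast the minimization problem as an equivalent variational problem on the linear space $H_0^1(\Sigma)$, extract $H^1$-coercivity from the single scalar hypothesis $\lambda_0^D(L_0)>0$, and then apply Lax--Milgram (equivalently, the direct method of the calculus of variations) to get the unique minimizer, which automatically solves $L_0u=0$ weakly.

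First, by the surjectivity of the trace operator $H^1(\Sigma)\to H^{1/2}(\partial\Sigma)$, I pick any lift $h_0\in H^1(\Sigma)$ with $h_0|_{\partial\Sigma}=h$, so that $\mathcal A_h=h_0+H_0^1(\Sigma)$ is a non-empty affine subspace. Writing $u=h_0+v$ with $v\in H_0^1(\Sigma)$ and expanding, one obtains
\[
Q^D[h_0+v]=Q^D[h_0]+2\,a(h_0,v)+a(v,v),
\]
where $a(u,w):=\int_\Sigma \nabla u\cdot\nabla w-\int_\Sigma Vuw$ is the symmetric bilinear form on $H^1(\Sigma)$ associated to $Q^D$. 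Minimizing over $v\in H_0^1(\Sigma)$ is thus equivalent to solving the variational equation $a(v,\varphi)=-a(h_0,\varphi)$ for all $\varphi\in H_0^1(\Sigma)$.

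The heart of the argument — and the main technical point — is to upgrade the scalar positivity $\lambda_0^D(L_0)>0$ to full $H^1$-coercivity of $a$ on $H_0^1(\Sigma)$. From the Rayleigh characterization $\lambda_0^D(L_0)=\inf\{a(v,v)/\|v\|_{L^2}^2:v\in H_0^1\setminus\{0\}\}$, any $v\in H_0^1(\Sigma)$ satisfies $\|v\|_{L^2}^2\le(\lambda_0^D)^{-1}a(v,v)$. Then, since $\|\nabla v\|_{L^2}^2=a(v,v)+\int_\Sigma Vv^2\le a(v,v)+\|V\|_\infty\|v\|_{L^2}^2$, one obtains
\[
\|v\|_{H^1}^2\le\Bigl(1+\tfrac{\|V\|_\infty+1}{\lambda_0^D(L_0)}\Bigr)a(v,v),
\]
so $a$ is coercive on $H_0^1(\Sigma)$; continuity is immediate from $V\in L^\infty$. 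Continuity of the linear form $\varphi\mapsto-a(h_0,\varphi)$ on $H_0^1(\Sigma)$ is automatic. Lax--Milgram then yields a unique $v^\star\in H_0^1(\Sigma)$ solving the variational equation, and strict convexity of $v\mapsto a(v,v)$ implies $\widehat h:=h_0+v^\star$ is the unique minimizer of $Q^D$ on $\mathcal A_h$; clearly $\widehat h$ is independent of the particular lift $h_0$ chosen.

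Finally, the Euler--Lagrange identity $a(\widehat h,\varphi)=\int_\Sigma(\nabla\widehat h\cdot\nabla\varphi-V\widehat h\,\varphi)=0$ for all $\varphi\in H_0^1(\Sigma)$ is precisely the weak formulation of $L_0\widehat h=\Delta\widehat h+V\widehat h=0$ in $\Sigma$ (using the sign convention $\Delta=\mathrm{div}\,\nabla$). For uniqueness of the weak $L_0$-harmonic extension, if $u_1,u_2$ are two such solutions with trace $h$, then $w=u_1-u_2\in H_0^1(\Sigma)$ satisfies $a(w,\varphi)=0$ for all $\varphi\in H_0^1(\Sigma)$; testing against $\varphi=w$ and invoking the coercivity bound above forces $w\equiv 0$. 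The only real obstacle is the coercivity step, which requires the strict positivity $\lambda_0^D(L_0)>0$ — the estimate fails in general if $V$ is allowed to push $a$ to be merely non-negative or indefinite, which is exactly why the coercive regime hypothesis is needed.
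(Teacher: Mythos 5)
Your proposal is correct and follows essentially the same route as the paper: decompose $\mathcal A_h=u_h+H_0^1(\Sigma)$, use $\lambda_0^D(L_0)>0$ to get coercivity of the quadratic form on $H_0^1(\Sigma)$, minimize, and read off the weak equation from the Euler--Lagrange identity. The only difference is cosmetic --- you invoke Lax--Milgram where the paper uses the direct method, and you helpfully spell out the step (which the paper only asserts) that the Rayleigh-quotient positivity upgrades to full $H^1$-coercivity with constant $\bigl(1+(\|V\|_\infty+1)/\lambda_0^D\bigr)$.
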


\begin{proof}
Fix $h\in H^{1/2}(\partial\Sigma)$ and choose $u_h\in H^1(\Sigma)$ with $u_h|_{\partial\Sigma}=h$.
For $v\in H_0^1(\Sigma)$ set $u=u_h+v$, so $u\in\mathcal A_h$.

Since $\lambda_0^D(L_0)>0$, the form $Q^D$ is coercive on $H_0^1(\Sigma)$. Hence the functional
$v\mapsto Q^D[u_h+v]$ is strictly convex, coercive, and weakly lower semicontinuous on
$H_0^1(\Sigma)$, and therefore it admits a unique minimizer $v_*\in H_0^1(\Sigma)$.
Setting $\widehat h:=u_h+v_*$ yields the unique minimizer of $Q^D$ in $\mathcal A_h$.

The Euler--Lagrange equation gives, for every $\varphi\in H_0^1(\Sigma)$,
\[
\frac{d}{dt}\Big|_{t=0} Q^D[\widehat h+t\varphi]=0
\quad\Longleftrightarrow\quad
\int_\Sigma \langle \nabla \widehat h,\nabla \varphi\rangle-\int_\Sigma V\,\widehat h\,\varphi=0,
\]
which is exactly the weak formulation of $L_0\widehat h=0$ with trace $h$.
Uniqueness follows from coercivity.
\end{proof}

Under Proposition~\ref{prop:Dirichlet-wellposed}, we define the Dirichlet-to-Neumann map
$\Lambda_{L_0}:H^{1/2}(\partial\Sigma)\to H^{-1/2}(\partial\Sigma)$ by
\[
\Lambda_{L_0}(h):=\left.\partial_\nu\widehat h\right|_{\partial\Sigma},
\]
and the associated boundary operator
\[
\mathcal S:=\Lambda_{L_0}-W.
\]
Then \eqref{eq:Steklov} is equivalent to $\mathcal S h=\sigma h$ on $\partial\Sigma$, where
$h=u|_{\partial\Sigma}$. In the coercive regime $\lambda_0^D(L_0)>0$, the operator $\mathcal S$
defines a self-adjoint operator on $L^2(\partial\Sigma)$ with discrete spectrum, and its
eigenvalues coincide with the Steklov spectrum of \eqref{eq:Steklov} (cf.\ \cite{Tran}).

\subsection{Variational characterizations}\label{subsec:Steklov-variational}

Let
\[
Q[u]=\int_\Sigma |\nabla u|^2-\int_\Sigma V u^2-\int_{\partial\Sigma}W u^2,
\qquad u\in H^1(\Sigma).
\]
If $u$ is $L_0$-harmonic in $\Sigma$, then integration by parts yields
\begin{equation}\label{eq:Q-boundary}
Q[u]=\int_{\partial\Sigma}u(\partial_\nu u-Wu)=\int_{\partial\Sigma}u\,B_0u.
\end{equation}
In particular, if $u$ solves \eqref{eq:Steklov}, then
$Q[u]=\sigma\int_{\partial\Sigma}u^2$.

Using harmonic extensions $\widehat h$ from Proposition~\ref{prop:Dirichlet-wellposed}, we obtain
the Rayleigh--Ritz characterizations
\begin{equation}\label{eq:sigma0-var}
\sigma_0
=\inf_{h\in H^{1/2}(\partial\Sigma)\setminus\{0\}}
\frac{Q[\widehat h]}{\int_{\partial\Sigma}h^2},
\end{equation}
and, if $h_0$ is the boundary trace of a first Steklov eigenfunction,
\begin{equation}\label{eq:sigma1-var}
\sigma_1
=\inf\left\{\frac{Q[\widehat h]}{\int_{\partial\Sigma}h^2}:\ h\in H^{1/2}(\partial\Sigma)\setminus\{0\},
\ \int_{\partial\Sigma}h\,h_0=0\right\}.
\end{equation}

\begin{remark}\label{rem:DtN-domain}
Without assuming $\lambda_0^D(L_0)>0$, one can still define a Dirichlet-to-Neumann map for $L_0$ on a suitable closed subspace of $L^2(\partial\Sigma)$ (and hence a corresponding Steklov spectrum), 
exactly as in Tran~\cite{Tran};
see also Zhu~\cite[Section 6]{JZhu} for a closely related construction via a Dirichlet-to-Robin map (and the associated modified Steklov spectrum) in the presence of Dirichlet kernel. 
We restrict to the coercive regime to avoid introducing these domain restrictions.
\end{remark}

\subsection{An upper bound for the first Steklov eigenvalue in the curvature-corrected case}\label{subsec:sigma0}

We return to the curvature-corrected pair
\[
L=\Delta+(V-aK),
\qquad
B=\partial_\nu-W+a\kappa_g,
\]
and assume $\lambda_0^D(L)>0$.

\begin{theorem}\label{thm:sigma0}
Let $(\Sigma^2,ds^2)$ be a compact oriented surface with boundary and assume $\lambda_0^D(L)>0$.
Then
\[
\sigma_0
\le
-\frac{1}{|\partial\Sigma|}\int_\Sigma V
-\frac{1}{|\partial\Sigma|}\int_{\partial\Sigma}W
+\frac{2\pi a\,\chi(\Sigma)}{|\partial\Sigma|}.
\]
Consequently,
\begin{equation}\label{eq:sigma0-essinf}
\sigma_0
\le
-\frac{|\Sigma|}{|\partial\Sigma|}\,\essinf_\Sigma V
-\essinf_{\partial\Sigma}W
+\frac{2\pi a\,\chi(\Sigma)}{|\partial\Sigma|}.
\end{equation}
Moreover, if $a\neq 0$ and equality holds in \eqref{eq:sigma0-essinf}, then $V$ and $W$ are constant
a.e.\ and
\[
K=\frac{V}{a},
\qquad
\kappa_g=\frac{\sigma_0+W}{a}
\]
are constant.
\end{theorem}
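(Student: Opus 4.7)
The plan is to adapt the Rayleigh--Ritz characterization \eqref{eq:sigma0-var} to the curvature-corrected pair $(L,B)$ and then test it against the constant function $u\equiv 1$. The associated quadratic form is
\[
Q[u]=\int_\Sigma|\nabla u|^2-\int_\Sigma(V-aK)u^2-\int_{\partial\Sigma}(W-a\kappa_g)u^2,
\]
and the entire discussion of Subsection~\ref{subsec:Steklov-variational} goes through verbatim with $V$ replaced by $V-aK$ and $W$ by $W-a\kappa_g$, provided the coercivity hypothesis $\lambda_0^D(L)>0$ (now for $L$) is invoked in place of $\lambda_0^D(L_0)>0$ so that Proposition~\ref{prop:Dirichlet-wellposed} still applies. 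Crucially, since the $L$-harmonic extension $\widehat h$ minimizes the Dirichlet part $Q^D[\cdot]=\int_\Sigma|\nabla\cdot|^2-\int_\Sigma(V-aK)\cdot^2$ among $H^1$-extensions of $h$, while $Q-Q^D$ depends only on the boundary trace, one upgrades \eqref{eq:sigma0-var} to the test-function inequality
\[
\sigma_0\int_{\partial\Sigma}u^2\;\le\;Q[u]\qquad\text{for every }u\in H^1(\Sigma)\text{ with }u|_{\partial\Sigma}\not\equiv 0.
\]

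Next I would insert $u\equiv 1$. The gradient vanishes and the curvature terms combine through Gauss--Bonnet:
\[
Q[1]=-\int_\Sigma V-\int_{\partial\Sigma}W+a\Bigl(\int_\Sigma K+\int_{\partial\Sigma}\kappa_g\Bigr)=-\int_\Sigma V-\int_{\partial\Sigma}W+2\pi a\,\chi(\Sigma).
\]
Dividing by $\int_{\partial\Sigma}1^2=|\partial\Sigma|$ gives the first inequality of the theorem. Bounding the two potential integrals below by $(\essinf_\Sigma V)|\Sigma|$ and $(\essinf_{\partial\Sigma}W)|\partial\Sigma|$ respectively then produces \eqref{eq:sigma0-essinf}.

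For the rigidity, assume $a\neq 0$ and that equality holds in \eqref{eq:sigma0-essinf}. Then both essinf estimates are sharp, which forces $V$ and $W$ to coincide almost everywhere with their essential infima, hence to be constant. The remaining equality $\sigma_0|\partial\Sigma|=Q[1]$ means that $u\equiv 1$ realizes the infimum in the Rayleigh--Ritz characterization, so $u\equiv 1$ is itself a first Steklov eigenfunction of $(L,B)$. The equation $L(1)=0$ then reads $V-aK=0$, giving the identity $K=V/a$, while $B(1)=\sigma_0$ reads $-W+a\kappa_g=\sigma_0$, giving $\kappa_g=(\sigma_0+W)/a$. Both are therefore constant, as claimed.

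The main obstacle I expect is not computational but conceptual: verifying cleanly that the variational principle indeed extends from boundary traces and their $L$-harmonic extensions to arbitrary admissible $H^1$-test functions, which is exactly what permits the very simple choice $u\equiv 1$. Once that upgrade is justified via the minimizing property of Proposition~\ref{prop:Dirichlet-wellposed} (adapted to $L$), the rest is a one-line Gauss--Bonnet computation and a direct eigenfunction identification.
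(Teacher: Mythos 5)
Your proposal is correct and follows essentially the same route as the paper: test the Rayleigh--Ritz quotient with the trace $h\equiv 1$, use the minimizing property of the $L$-harmonic extension from Proposition~\ref{prop:Dirichlet-wellposed} to replace $Q[\widehat 1]$ by $Q[1]$, evaluate $Q[1]$ via Gauss--Bonnet, and in the equality case invoke uniqueness of the minimizer to conclude $\widehat 1\equiv 1$, so that $L1=0$ and $B1=\sigma_0$ yield the rigidity.
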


\begin{proof}
Let $h\equiv 1$ on $\partial\Sigma$ and let $\widehat 1$ be its $L$-harmonic extension. By
\eqref{eq:sigma0-var},
\[
\sigma_0\le \frac{Q[\widehat 1]}{\int_{\partial\Sigma}1}=\frac{Q[\widehat 1]}{|\partial\Sigma|}.
\]
By Proposition~\ref{prop:Dirichlet-wellposed} applied to $L$, the harmonic extension $\widehat 1$
minimizes the interior energy $Q^D$ among all extensions with trace $1$, hence
$Q^D[\widehat 1]\le Q^D[1]$. Since the boundary term in $Q$ is fixed by the trace, we obtain
$Q[\widehat 1]\le Q[1]$, and therefore $\sigma_0\le Q[1]/|\partial\Sigma|$. Finally, by the
definition of $Q$ and Gauss--Bonnet,
\[
Q[1]=-\int_\Sigma(V-aK)-\int_{\partial\Sigma}(W-a\kappa_g)
=-\int_\Sigma V-\int_{\partial\Sigma}W+2\pi a\,\chi(\Sigma),
\]
which yields the first inequality; \eqref{eq:sigma0-essinf} follows from the $\essinf$ bounds.

If equality holds in \eqref{eq:sigma0-essinf}, then $Q[\widehat 1]=Q[1]$, and by uniqueness of the
$L$-harmonic extension this forces $\widehat 1\equiv 1$. Hence $L1=0$ in $\Sigma$ and
$B1=\sigma_0$ on $\partial\Sigma$, implying $V=aK$ and $-W+a\kappa_g=\sigma_0$ a.e.\
If $a\neq 0$, this yields the stated rigidity.
\end{proof}

\subsection{An upper bound for the second Steklov eigenvalue}\label{subsec:sigma1}

We state an estimate in the coercive regime $\lambda_0^D(L_0)>0$, obtained by combining the
Fraser--Schoen \cite{FraserSchoen11} conformal mapping method with the minimizing property of harmonic extensions.
We emphasize that the key conformal input is the same hemisphere mapping used earlier in
Subsubsection~\ref{subsubsec:planar-domains} (cf.\ \eqref{energy-hemisphere}); the difference is
that here the min--max constraint is expressed in terms of boundary traces, so the balancing
condition is imposed on $\partial\Sigma$.

\begin{theorem}\label{thm:sigma1}
Let $(\Sigma^2,ds^2)$ be a compact oriented surface with boundary of genus $\gamma$ and $r$ boundary
components. Assume $\lambda_0^D(L_0)>0$ for $L_0=\Delta+V$, and let $\sigma_1=\sigma_1(L_0,B_0)$ be
the second Steklov eigenvalue of \eqref{eq:Steklov} with $B_0=\partial_\nu-W$. Then
\[
\sigma_1\,|\partial\Sigma|
<
4\pi(\gamma+r)-\int_\Sigma V-\int_{\partial\Sigma}W.
\]
\end{theorem}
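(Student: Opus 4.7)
The plan is to adapt the hemisphere mapping of Section~\ref{subsubsec:planar-domains} to the Steklov setting, combining it with the harmonic-extension minimization property of Proposition~\ref{prop:Dirichlet-wellposed} and a Filonov-type exploitation of the coercivity hypothesis $\lambda_0^D(L_0)>0$. I begin by producing, via Gabard's theorem, a conformal proper branched cover $\Phi=(\phi_1,\phi_2,\phi_3):\Sigma\to\mathbb{S}^2_+\subset\mathbb{R}^3$ of degree at most $\gamma+r$, with $\phi_1^2+\phi_2^2+\phi_3^2\equiv 1$ on $\Sigma$, $\phi_3\equiv 0$ on $\partial\Sigma$ (hence $\phi_1^2+\phi_2^2\equiv 1$ there), and total Dirichlet energy bounded as in \eqref{energy-hemisphere}. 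The key change from Section~\ref{subsubsec:planar-domains} is that the balancing condition must now be imposed on $\partial\Sigma$ against the boundary trace $h_0$ of a first Steklov eigenfunction: composing $\Phi$ with a suitable conformal automorphism of $\mathbb{S}^2_+$, I arrange $\int_{\partial\Sigma}h_0\,\phi_i\,ds=0$ for $i=1,2$, so that $\phi_1|_{\partial\Sigma}$ and $\phi_2|_{\partial\Sigma}$ become admissible test functions in \eqref{eq:sigma1-var}.

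Next I assemble the estimate. For $i=1,2$, the variational characterization \eqref{eq:sigma1-var} gives $\sigma_1\int_{\partial\Sigma}\phi_i^2\le Q[\widehat{\phi_i|_{\partial\Sigma}}]$, and Proposition~\ref{prop:Dirichlet-wellposed} supplies the inequality $Q[\widehat{\phi_i|_{\partial\Sigma}}]\le Q[\phi_i]$, since the $L_0$-harmonic extension minimizes the interior energy $Q^D$ among all $H^1$-extensions sharing a given trace while the boundary part of $Q$ depends only on that trace. Summing over $i=1,2$ and using $\phi_1^2+\phi_2^2=1$ on $\partial\Sigma$ together with $\phi_1^2+\phi_2^2=1-\phi_3^2$ on $\Sigma$ produces
\[
\sigma_1|\partial\Sigma|
\;\le\;
\sum_{i=1}^2\int_\Sigma|\nabla\phi_i|^2
-\int_\Sigma V
+\int_\Sigma V\phi_3^2
-\int_{\partial\Sigma}W.
\]
Since $\phi_3\in H_0^1(\Sigma)$ and $\phi_3\not\equiv 0$ (because $\Phi$ surjects onto $\mathbb{S}^2_+$), the coercivity $\lambda_0^D(L_0)>0$ yields the strict Filonov-type bound $\int_\Sigma V\phi_3^2<\int_\Sigma|\nabla\phi_3|^2$, which upgrades the previous display to a strict inequality and folds the $i=3$ gradient term into the sum. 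Combining with \eqref{energy-hemisphere} then yields the desired bound $\sigma_1|\partial\Sigma|<4\pi(\gamma+r)-\int_\Sigma V-\int_{\partial\Sigma}W$.

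The main technical obstacle is the balancing step, which differs from Lemma~\ref{lem:hersch} in that orthogonality is imposed on $\partial\Sigma$ against $h_0$ rather than on $\Sigma$ against an interior eigenfunction. Its existence rests on the usual Brouwer-type degree argument for the map from the three-parameter automorphism group of $\mathbb{S}^2_+$ to the pair $\bigl(\int_{\partial\Sigma}h_0\,(g\circ\Phi)_1,\int_{\partial\Sigma}h_0\,(g\circ\Phi)_2\bigr)\in\mathbb{R}^2$, in the spirit of Nunes~\cite{Nunes2017} and Mendes~\cite{Mendes2018}, and it is automatic provided $h_0\not\equiv 0$ on $\partial\Sigma$. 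Once this balancing is in place, the remainder of the proof is a clean combination of the harmonic-extension minimization and the Filonov-type use of coercivity---precisely the two tools that the assumption $\lambda_0^D(L_0)>0$ makes available.
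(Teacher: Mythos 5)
Your proposal is correct and follows essentially the same route as the paper's proof: Gabard's hemisphere cover, boundary balancing against $h_0$ via a conformal automorphism of $\mathbb S^2_+$, the inequality $Q[\widehat{\phi_i}]\le Q[\phi_i]$ from the minimizing property of $L_0$-harmonic extensions, and the Dirichlet coercivity applied to $\phi_3\in H_0^1(\Sigma)$ to absorb the interior deficit $\int_\Sigma V\phi_3^2<\int_\Sigma|\nabla\phi_3|^2$ and obtain strictness. The only cosmetic difference is that you spell out the Brouwer-degree argument for the boundary balancing, which the paper delegates to Fraser--Schoen.
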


\begin{proof}
Let $u_0$ be a first Steklov eigenfunction and set $h_0=u_0|_{\partial\Sigma}$. By Gabard's theorem
\cite{Gabard2006}, there exists a proper conformal branched cover
$\Phi=(\phi_1,\phi_2,\phi_3):\Sigma\to \mathbb S^2_+\subset\mathbb R^3$ of degree at most $\gamma+r$
mapping $\partial\Sigma$ into the equator. In particular,
\[
\sum_{i=1}^3\int_\Sigma|\nabla\phi_i|^2\le 4\pi(\gamma+r),\qquad
\phi_3|_{\partial\Sigma}=0,\qquad \phi_1^2+\phi_2^2=1\ \text{on }\partial\Sigma.
\]
Postcomposing with a conformal automorphism of $\mathbb S^2_+$, we may assume
\[
\int_{\partial\Sigma}\phi_1 h_0=\int_{\partial\Sigma}\phi_2 h_0=0,
\]
by the Hersch balancing argument as in Fraser--Schoen~\cite{FraserSchoen11}.
Let $\widehat\phi_i$ be the $L_0$-harmonic extension of $\phi_i|_{\partial\Sigma}$. Then
$\widehat\phi_i$ is admissible in \eqref{eq:sigma1-var} and, by the minimizing property of harmonic
extensions,
\[
Q[\widehat\phi_i]\le Q[\phi_i],\qquad i=1,2.
\]
Therefore,
\[
\sigma_1\int_{\partial\Sigma}\phi_i^2\le Q[\phi_i],\qquad i=1,2.
\]
Summing and using $\phi_1^2+\phi_2^2=1$ on $\partial\Sigma$ yields
\[
\sigma_1|\partial\Sigma|\le Q[\phi_1]+Q[\phi_2].
\]
Since $\phi_3|_{\partial\Sigma}=0$, the Dirichlet variational principle gives
\[
\int_\Sigma V\phi_3^2\le \int_\Sigma|\nabla\phi_3|^2-\lambda_0^D(L_0)\int_\Sigma\phi_3^2.
\]
Using $\phi_1^2+\phi_2^2=1-\phi_3^2$ on $\Sigma$, one finds
\[
Q[\phi_1]+Q[\phi_2]
\le
\sum_{i=1}^3\int_\Sigma|\nabla\phi_i|^2-\int_\Sigma V-\int_{\partial\Sigma}W
-\lambda_0^D(L_0)\int_\Sigma\phi_3^2.
\]
Combining with the energy bound gives
\[
\sigma_1|\partial\Sigma|+\lambda_0^D(L_0)\int_\Sigma\phi_3^2
\le
4\pi(\gamma+r)-\int_\Sigma V-\int_{\partial\Sigma}W.
\]
Since $\lambda_0^D(L_0)>0$ and $\Phi$ is nonconstant, we have $\phi_3\not\equiv 0$, hence the extra
term is strictly positive, yielding the strict inequality.
\end{proof}

In the proof above we used a branched conformal cover
$\Phi=(\phi_1,\phi_2,\phi_3):\Sigma\to \mathbb S^2_+$ in order to exploit the boundary identity
$\phi_1^2+\phi_2^2=1$ on $\partial\Sigma$, at the expense of the energy bound
$\sum_{i=1}^3\int_\Sigma|\nabla\phi_i|^2\le 4\pi(\gamma+r)$.
If one assumes in addition that $V\le 0$ on $\Sigma$, one can instead work directly with
Gabard's branched cover $\varPhi=(\varphi^1,\varphi^2):\Sigma\to\mathbb D$ of degree at most
$\gamma+r$. In this case $(\varphi^1)^2+(\varphi^2)^2=1$ on $\partial\Sigma$ and
$(\varphi^1)^2+(\varphi^2)^2\le 1$ on $\Sigma$, and the nonpositivity of $V$ allows the interior
deficit $1-(\varphi^1)^2-(\varphi^2)^2$ to be handled without introducing a third coordinate.
This yields the sharper constant $2\pi(\gamma+r)$.

\begin{corollary}\label{cor:sigma1-Vle0}
Let $(\Sigma^2,ds^2)$ be a compact oriented surface with boundary of genus $\gamma$ and $r$ boundary
components. Assume $\lambda_0^D(L_0)>0$ for $L_0=\Delta+V$, and suppose moreover that $V\le 0$ on
$\Sigma$. Let $\sigma_1=\sigma_1(L_0,B_0)$ be the second Steklov eigenvalue of \eqref{eq:Steklov} with
$B_0=\partial_\nu-W$. Then
\[
\sigma_1\,|\partial\Sigma|
\le
2\pi(\gamma+r)-\int_\Sigma V-\int_{\partial\Sigma}W.
\]
\end{corollary}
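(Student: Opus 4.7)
My plan is to mirror the proof of Theorem~\ref{thm:sigma1} with two modifications: replace the hemisphere cover $\Phi:\Sigma\to\mathbb S^{2}_{+}$ by Gabard's branched cover into the disk, and replace the ``use $\phi_{3}$ plus Dirichlet principle'' step by a direct appeal to the sign hypothesis $V\le 0$. The source of the sharper constant is purely conformal: $\mathrm{Area}(\mathbb D)=\pi$ while $\mathrm{Area}(\mathbb S^{2}_{+})=2\pi$, so the Dirichlet energy of a conformal branched cover of degree $d$ drops from $4\pi d$ to $2\pi d$, which is exactly the factor of two improvement.

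Concretely, I would invoke Gabard's theorem \cite{Gabard2006} to produce a proper conformal branched cover $\varPhi=(\varphi^{1},\varphi^{2}):\Sigma\to\overline{\mathbb D}\subset\mathbb R^{2}$ of degree at most $\gamma+r$, sending $\partial\Sigma$ to $\partial\mathbb D$, so that $(\varphi^{1})^{2}+(\varphi^{2})^{2}=1$ on $\partial\Sigma$ and $\le 1$ on $\Sigma$. Letting $h_{0}=u_{0}|_{\partial\Sigma}$ denote the boundary trace of a first Steklov eigenfunction, a Hersch-type balancing using the two-parameter family $\{T_{a}\}_{a\in\mathbb D}$ of Möbius automorphisms of $\mathbb D$ allows one to post-compose $\varPhi$ so that $\int_{\partial\Sigma}\varphi^{i}h_{0}=0$ for $i=1,2$. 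Each harmonic extension $\widehat{\varphi^{i}}$ is then admissible in \eqref{eq:sigma1-var}, and the minimizing property from Proposition~\ref{prop:Dirichlet-wellposed} gives $Q[\widehat{\varphi^{i}}]\le Q[\varphi^{i}]$. Summing $i=1,2$ and using $(\varphi^{1})^{2}+(\varphi^{2})^{2}=1$ on $\partial\Sigma$ yields
\[
\sigma_{1}\,|\partial\Sigma|\;\le\;\sum_{i=1}^{2}\int_{\Sigma}|\nabla\varphi^{i}|^{2}\;-\;\int_{\Sigma}V\big((\varphi^{1})^{2}+(\varphi^{2})^{2}\big)\;-\;\int_{\partial\Sigma}W.
\]
The interior potential term is controlled by the key identity
\[
-\int_{\Sigma}V\big((\varphi^{1})^{2}+(\varphi^{2})^{2}\big)\;=\;-\int_{\Sigma}V\;+\;\int_{\Sigma}V\big(1-(\varphi^{1})^{2}-(\varphi^{2})^{2}\big),
\]
whose last summand is $\le 0$ since $V\le 0$ and the deficit is $\ge 0$, and is therefore bounded above by $-\int_{\Sigma}V$. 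Finally, conformality of $\varPhi$ together with $\mathrm{Area}(\mathbb D)=\pi$ gives $\sum_{i=1}^{2}\int_{\Sigma}|\nabla\varphi^{i}|^{2}=2\pi\deg(\varPhi)\le 2\pi(\gamma+r)$, and substituting yields the claim.

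The main subtle step is the Hersch balancing in the disk model: one must check that the Fraser--Schoen \cite{FraserSchoen11} degree argument — that the map $a\mapsto\int_{\partial\Sigma}T_{a}(\varPhi)\,h_{0}$ from $\mathbb D$ to $\mathbb R^{2}$ extends continuously to $\overline{\mathbb D}$ with nonzero boundary winding — goes through in the coercive Schrödinger setting. This hinges on $h_{0}$ having definite sign (so that $\int_{\partial\Sigma}h_{0}\neq 0$ and the concentration limit $T_{a}(\varPhi)\to a$ as $|a|\to 1$ produces the required degree-one behavior); the simplicity of $\sigma_{0}$ under $\lambda_{0}^{D}(L_{0})>0$ gives that sign via the standard Perron argument. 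Once this balancing is in hand, the remaining estimates are minor adaptations of those already carried out in the proof of Theorem~\ref{thm:sigma1}.
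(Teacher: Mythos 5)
Your proposal is correct and follows exactly the route the paper intends for this corollary (the paragraph preceding it sketches precisely this argument): Gabard's degree-$\le(\gamma+r)$ branched cover to $\mathbb D$ with energy $2\pi\deg(\varPhi)$, Hersch balancing of $\varphi^1,\varphi^2$ against $h_0$, the minimizing property $Q[\widehat{\varphi^i}]\le Q[\varphi^i]$, and the sign of $V$ to absorb the interior deficit $1-(\varphi^1)^2-(\varphi^2)^2$. Your extra remark on why $h_0$ has a definite sign (needed for the balancing) is a welcome point of care that the paper itself leaves implicit in Theorem~\ref{thm:sigma1}.
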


\section*{Acknowledgments}
We thank Ivaldo Nunes and Jo\~ao Henrique Andrade for their interest in this work and for valuable
comments and suggestions.

This work was carried out within the CAPES--Math AmSud project \emph{New Trends in Geometric Analysis}
(CAPES, Grant 88887.985521/2024-00).

R.\,A.\ was partially supported by the Brazilian National Council for Scientific and Technological
Development (CNPq, Grant 151804/2024-9).
M.\,C.\ was partially supported by CNPq (Grants 405468/2021-0 and 311136/2023-0).
V.\,S.\ was partially supported by FAPEAL (Grant E:60030.0000000971/2024).

\section*{Data Availability}
Data availability is not applicable to this article, as no data sets were generated or analyzed in the course of this research.

\section*{Conflict of Interest}
The authors declare that they have no conflict of interest related to this article.

\bibliographystyle{amsplain}
\bibliography{bibliography}

\end{document}